\title{Martingales and descent statistics}
\author{Alperen  \"{O}zdemir}
\address{Department of Mathematics, Georgia Institute of Technology} 
\email{aozdemir6@gatech.edu} 
\newtheorem{theorem}{Theorem}[section]
\newtheorem{lemma}{Lemma}[section]
\newtheorem{proposition}{Proposition}[section]
\DeclareRobustCommand{\Stirling}{\genfrac\{\}{0pt}{}}
\newcommand{\bbox}{\hfill $\Box$}
\newcommand{\pf}{\noindent {\it Proof:} }
\keywords{descents in random permutations,
discrete-time martingales,
rate of convergence in the central limit theorem,
recurrence relations,
vector martingales}
\subjclass[2020]{60C05, 60F05, 60G42}
\begin{document}

\begin{abstract}
This paper develops techniques to study the number of descents in random permutations via martingales. We relax an assumption in the Berry-Esseen theorem of Bolthausen (1982) to extend the theorem's scope to martingale differences of time-dependent variances. This extension leads to a new proof of the fact that the number of descents in random permutations is asymptotically normal with an error bound of order $1/\sqrt{n}.$  The same techniques are shown to be applicable to other descent and descent-related statistics as they satisfy certain recurrence relation conditions. These statistics include inversions, descents in signed permutations, descents in Stirling permutations, the length of the longest alternating subsequences, descents in matchings and two-sided Eulerian numbers. 
\end{abstract}

\maketitle

\section{Introduction} \label{intro}
Let $S_n$ be the symmetric group defined over $n$ elements. A permutation $\pi \in S_n$ is said to have a descent at position $i$ if $\pi(i) > \pi(i+1).$  Let $D_n$ be the random variable counting the number of descents of a random permutation in $S_n.$ 

The procedure which leads to a martingale is as follows. Given a permutation $\pi$ in one-line notation (i.e., $\pi=\pi(1) \pi(2) \cdots \pi(n)$), inserting $(n+1)$ in a random position yields a descent except the terminal position. Yet if it is inserted right next to a position where $\pi$ has a descent, the descent is broken up. Therefore, after inserting $(n+1),$ there are exactly $D_n+1$ positions that the number of descents stays the same, otherwise it increases by 1. Then for the sigma field $\mathcal{F}_n= \sigma(D_1,\cdots, D_n),$ we have
\begin{align*}
\mathbf{E}(D_{n+1}| \mathcal{F}_n) &= D_n \frac{D_n+1}{n+1} + (D_n+1) \frac{n-D_n}{n+1} \\
&= \frac{n}{n+1} D_n + \frac{n}{n+1}. 
\end{align*}
The mean of $D_n$ is $\frac{n-1}{2}$ by symmetry. Subtracting the mean followed by a proper scaling gives the martingale below.
\begin{equation*}
Z_n :=n \left( D_n - \frac{n-1}{2} \right).
\end{equation*}
Observe that $\mathbf{E}(Z_n)=0$ and $\mathbf{E}(Z_{n}|\mathcal{F}_m)=Z_m$ for $n>m.$ Therefore, $\{Z_n, \mathcal{F}_n \}$ is a zero-mean martingale for $n\geq 1.$ We want to show 
\begin{theorem} \label{main}
Let $D_n$ be the number of descents in a uniformly chosen permutation from $S_n.$ Then
\begin{equation*}
\sup_{x \in \mathbb{R}} \left| \mathbf{P} \left(\frac{D_n - \frac{n-1}{2}}{\sqrt{\frac{n+1}{12}}} \leq x\right) - \Phi(x) \right| \leq \frac{C}{\sqrt{n}},
\end{equation*}
where $\Phi$ is the standard normal distribution and $C$ is a constant independent of $n$.
\end{theorem}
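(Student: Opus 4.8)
The plan is to exhibit the normalized statistic as a sum of martingale differences and apply a quantitative (Berry--Esseen type) martingale central limit theorem, after checking its hypotheses from the recurrence already derived. Using the classical value $\operatorname{Var}(D_n)=(n+1)/12$, the quantity to control is
\begin{equation*}
S_n:=\frac{D_n-\frac{n-1}{2}}{\sqrt{(n+1)/12}}=\frac{Z_n}{s_n},\qquad s_n^2:=\mathbf{E}(Z_n^2)=\frac{n^2(n+1)}{12},
\end{equation*}
where $Z_n=\sum_{k=2}^n Y_k$ with martingale increments $Y_k:=Z_k-Z_{k-1}$ and $Z_1=0$. Writing $\epsilon_k:=D_k-D_{k-1}\in\{0,1\}$ for the indicator that inserting $k$ creates a descent, the identity $Z_k=kD_k-\binom{k}{2}$ gives $Y_k=D_{k-1}-(k-1)+k\epsilon_k$, so $Y_k\in\{D_{k-1}+1,\,D_{k-1}-(k-1)\}$, hence $|Y_k|\le k-1$; and, since conditionally $\epsilon_k$ is Bernoulli with parameter $(k-1-D_{k-1})/k$,
\begin{equation*}
\mathbf{E}\big(Y_k^2\mid\mathcal{F}_{k-1}\big)=(D_{k-1}+1)(k-1-D_{k-1})=\frac{k^2}{4}-\Big(D_{k-1}-\frac{k-2}{2}\Big)^2 .
\end{equation*}

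Next I would reduce the theorem to three estimates for the triangular array $\xi_{n,k}:=Y_k/s_n$, $2\le k\le n$. (i) \emph{Uniform smallness}: $\max_k|\xi_{n,k}|\le(n-1)/s_n=O(n^{-1/2})$. (ii) \emph{Lyapunov bound}: from $|Y_k|^3\le(k-1)Y_k^2$ and $\mathbf{E}(Y_k^2)=k^2/4-k/12$ one gets $\sum_k\mathbf{E}|\xi_{n,k}|^3\le s_n^{-3}\sum_k(k-1)\mathbf{E}(Y_k^2)=O(n^4/n^{9/2})=O(n^{-1/2})$, and likewise $\sum_k\mathbf{E}|\xi_{n,k}|^4=O(n^{-1})$. (iii) \emph{Concentration of the conditional variance}: writing $\mu_m:=\mathbf{E}(D_m)=\tfrac{m-1}{2}$ and $G_m:=(D_m-\mu_m)^2-\operatorname{Var}(D_m)$, the displayed identity and reindexing $m=k-1$ give $V_n^2:=\sum_{k=2}^n\mathbf{E}(Y_k^2\mid\mathcal{F}_{k-1})=s_n^2-\sum_{m=1}^{n-1}G_m$, with $\mathbf{E}(V_n^2)=s_n^2$. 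Iterating the recurrence (equivalently, the martingale property of $Z$) gives $\mathbf{E}(D_{m'}-\mu_{m'}\mid\mathcal{F}_m)=\tfrac{m}{m'}(D_m-\mu_m)$, and the increments induce the solvable first-order recursion $l\,h(l)=(l-2)h(l-1)+l/4$ for $h(l):=\mathbf{E}\big((D_l-\mu_l)^2\mid\mathcal{F}_m\big)$; combining these yields, for $m\le m'$, the clean identity $\mathbf{E}[G_mG_{m'}]=\tfrac{m(m-1)}{m'(m'-1)}\operatorname{Var}(G_m)$, so that with the elementary bound $\operatorname{Var}(G_m)\le\mathbf{E}[(D_m-\mu_m)^4]=O(m^2)$ the double sum collapses: $\operatorname{Var}(V_n^2)=\sum_{m,m'}\mathbf{E}[G_mG_{m'}]=O\big(\sum_{m\le m'\le n}m^4/m'^2\big)=O(n^4)$. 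Hence $\mathbf{E}|V_n^2/s_n^2-1|\le(\operatorname{Var}(V_n^2)/s_n^4)^{1/2}=O(n^{-1})$.

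With (i)--(iii) in hand I would feed the array into a sufficiently sharp Berry--Esseen bound for martingale difference arrays --- one that, because the conditional variance is nearly deterministic here, returns an error of the order of the Lyapunov sum plus $\mathbf{E}|V_n^2/s_n^2-1|$ (adjoining, if convenient, one auxiliary increment of conditional variance $1-V_n^2/s_n^2$, or stopping, to make the total conditional variance equal $1$) --- and, should no reference be in exactly the required form, reprove the estimate by a direct Lindeberg-replacement argument exploiting the uniform smallness of the $\xi_{n,k}$ and the concentration of $V_n^2$. This gives $\sup_x|\mathbf{P}(S_n\le x)-\Phi(x)|\le C(n^{-1/2}+n^{-1})=O(n^{-1/2})$.

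The hard part is the \emph{exponent}. The off-the-shelf martingale Berry--Esseen estimates of Heyde--Brown type raise the combination of the Lyapunov term and the conditional-variance discrepancy to a power at most $1/3$, which here would give only $n^{-1/3}$ (or $n^{-1/5}$ with fourth moments). Getting the true $n^{-1/2}$ rate requires a version in which the conditional-variance discrepancy enters \emph{linearly}; this is legitimate precisely because that discrepancy is $O(n^{-1})$, of strictly smaller order than the Lyapunov ratio $O(n^{-1/2})$, so the rate is governed linearly by the Lyapunov term, exactly as in the classical independent-summands Berry--Esseen theorem. The main technical input supporting this is the bound $\operatorname{Var}(V_n^2)=O(n^4)$ above, where the martingale structure and the explicit conditional law of the increments $\epsilon_m$ do the work through the linear recursion for the conditional second moment of $D_l$ along the filtration.
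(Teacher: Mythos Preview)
Your martingale setup and the moment computations are the same as the paper's, and in fact your analysis of the conditional variance is \emph{sharper}: the paper merely bounds $\mathbf{E}[(V_{nn}^2-1)^2]$ via Cauchy--Schwarz to get convergence to $0$, whereas your observation that $l(l-1)G_l$ is itself an $\{\mathcal{F}_l\}$-martingale (equivalently, the clean identity $\mathbf{E}[G_mG_{m'}]=\tfrac{m(m-1)}{m'(m'-1)}\operatorname{Var}(G_m)$) yields the honest rate $\mathbf{E}|V_n^2/s_n^2-1|=O(n^{-1})$. That computation is correct and is not in the paper.

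However, note that the paper's own proof is explicitly labelled ``Proof of Asymptotic Normality in Theorem~\ref{main}'' and only applies the Hall--Heyde martingale CLT (Theorem~\ref{thm2}); it establishes $Z_{nn}\overset{d}{\to}N(0,1)$ and nothing more. The Berry--Esseen rate $C/\sqrt{n}$ stated in Theorem~\ref{main} is \emph{not} proved in the paper by the martingale route --- the paper simply records it as a known fact obtainable by the other methods listed in the Introduction (real-rootedness, Stein's method, $m$-dependence).

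Your proposal, by contrast, tries to recover the rate from the martingale structure, and this is where the gap lies. You correctly identify that the standard Heyde--Brown/Haeusler bounds give at best $O(n^{-1/3})$ here, and you assert that because the conditional-variance discrepancy is $O(n^{-1})$ one may invoke ``a version in which the conditional-variance discrepancy enters linearly,'' or else ``reprove the estimate by a direct Lindeberg-replacement argument.'' Neither of these is substantiated. Bolthausen's theorem, which does treat the case of a.s.\ fixed conditional variance, still carries a logarithmic loss; and a bare Lindeberg swap against Gaussians controls smooth test functions, not the Kolmogorov distance, so a smoothing step is needed and that step typically costs exactly the exponent you are trying to save. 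The trick of adjoining an auxiliary increment to force $\sum_k\mathbf{E}[\xi_{n,k}^2\mid\mathcal{F}_{k-1}]=1$ does not by itself upgrade the exponent. In short: your estimates (i)--(iii) are correct and are precisely the inputs one would want, but the black box you propose to feed them into does not exist in the literature in the form required, and the sketch you give does not show how to build it. If your goal is to match the paper, proving asymptotic normality suffices and your argument already does that; if your goal is the stated $n^{-1/2}$ rate, the martingale route as written is incomplete, and the honest fix is to fall back on the $2$-dependence of the $T_i$ and cite a Berry--Esseen theorem for $m$-dependent sums.
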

The asymptotic normality, not necessarily the order of convergence, in the theorem has been proved by various different methods. We recall that the number of permutations in $S_n$ with  $k$ descents is the Eulerian number $A_{n,k}$. The fact that the Eulerian polynomial $A_n(t)= \sum_{k} A_{n,k} t^k$ has only real roots, which was known to Frobenius \cite{Fr}, gives asymptotic normality by Harper's approach \cite {Ha}. A proof by the method of moments is given in \cite{DB}. Bender \cite{B} expanded the generating function for Eulerian numbers around its singularities to approximate the characteristic function of the distribution of number of descents. Indeed, the explicit formula for $A(n,k)$ matches with the distribution function of $n$ uniform random variables whose sum is between $k$ and $(k+1),$ which was shown by Tanny \cite{T}. Fulman \cite{F} used Stein's method of exchangable pairs to prove the theorem. Also central limit theorems involving locally dependent random variables can be employed to show the result, see Corollary 2 of \cite{BR89}. In this paper, we use martingale limit theorems to prove the asymptotic normality and the order of convergence, then apply it to other combinatorial statistics. The paper consists of two parts. 

In the first part, we study the moments of $D_n$ in Section \ref{Mom}, and then show the asymptotic normality by a classical result in Section \ref{clt}. Section \ref{Bolt} is about the rate of convergence in the central limit theorem, and contains the proof of Theorem \ref{main}. We argue that a central limit theorem in \cite{Bh} that gives an order of convergence of $1/\sqrt{n}$ can be applied to our case after relaxing one of its assumptions. The assumption is that the variance of the martingale differences is asymptotically constant, which holds true for stationary martingale differences but leaves out non-stationary examples of increasing variance as in our case. We give a proof at the end of the paper, Section \ref{Pf}, that the polynomial growth of the variance of the martingale differences is sufficient to obtain an error bound of order $1/\sqrt{n}$. 

In the second part, we abstract the martingale formulation from the initial idea by means of recurrence relations and apply it to other descent-related statistics. In Section \ref{Recform}, we show how to obtain martingales from a given set of recurrence relations. Then we give a formula of certain combinatorial interest, which gives the recurrence relation coefficients for a class of permutation statistics. Section \ref{App} supplies examples. 

\section{Moments} \label{Mom}
We already noted that $\mathbf{E}(D_n)=\frac{n-1}{2}.$ The variance of $D_n$ is also well-known to be $\frac{n+1}{12}.$ The calculation of the variance can be found in Chapter 6 of \cite{Bo}; we adopt the same technique therein to obtain the fourth central moment. One can also use the method of moments as in \cite{CK}. 

\indent First define $2$-dependent random variables over $S_n$ as follows:
\begin{equation*}
   T_i(\pi) = 
    \begin{cases}
      1  \text{ if } \pi(i) > \pi(i+1),\\
      0 \text{ otherwise.} 
    \end{cases}
\end{equation*}
Clearly, $D_n=\sum_{i=1}^{n-1} T_i.$ The third moment can be written as
\begin{equation*}
\mathbf{E}(D_n^3)=\sum_{i} \mathbf{E}(T_i^3) + 6 \sum_{i < j} \mathbf{E}(T_i^2 T_j) +  6 \sum_{i < j < k} \mathbf{E}(T_i T_j T_k).  
\end{equation*}
The first term satisfies $\sum_{i} \mathbf{E}(T_i^3)=\sum_{i} \mathbf{E}(T_i)=\frac{n-1}{2}.$ For the second term, $6 \sum_{i < j} \mathbf{E}(T_i^2 T_j)=6 \sum_{i < j} \mathbf{E}(T_i T_j),$ we use $2$-dependence of $T_i'$s. There are $(n-2)$ cases where $j=i+1,$ for which $\mathbf{E}(T_i T_j)=\frac{1}{6}.$ For the remaining $\binom{n-2}{2}$ cases, $i$ and $j$ differ by more than $1,$ so they are independent. The expected value in the latter case is $\frac{1}{4}.$ Analyzing the cases in a similar fashion for the last term yields
\begin{align*}
\mathbf{E}(D_n^3) &= \frac{n-1}{2} + 6 \left(\frac{n-2}{6}+  \frac{\binom{n-2}{2}}{4}\right) + 6 \left(\frac{n-3}{24}+ 2 \frac{\binom{n-3}{2}}{12}+ \frac{\binom{n-3}{3}}{8} \right) \notag \\
&=\frac{(n^2-n+2)(n-1)}{8}.
\end{align*}
The fourth moment is given by
\begin{align*}
\mathbf{E}(D_n^4) =&\sum_{i} \mathbf{E}(T_i^4) + 8 \sum_{i < j} \mathbf{E}(T_i^3 T_j) + 6 \sum_{i < j} \mathbf{E}(T_i^2 T_j^2) + \\ & 36 \sum_{i < j < k} \mathbf{E}(T_i^2 T_j T_k)+ 24 \sum_{i < j < k<l} \mathbf{E}(T_i T_j T_k T_l).  
\end{align*} 
By the same argument as above,
\begin{equation*}
\mathbf{E}(D_n^4)=\frac{n^4}{16}-\frac{n^3}{8} + \frac{13n^2}{48}+\mathcal{O}(n).
\end{equation*}
Finally, we calculate the fourth central moment, 
\begin{equation} \label{fourth}
\mathbf{E}[(D_n-\mathbf{E}(D_n))^4]=\frac{n^2}{48}+\mathcal{O}(n).
\end{equation}

\section{Asymptotic normality} \label{clt}

A comprehensive account of martingale limit theorems and related topics is \cite{HH}. The central limit theorem therein requires conditions analogous to those of classical central limit theorems for sums of independent random variables. The first one is the Lindeberg condition. The second condition is about the convergence in probability of the conditional variance of martingale differences. The limiting behavior of a martingale is essentially determined by this quantity or its counterpart $U^2_n := \sum X_i^2$ where $X_i$ is the $i$th martingale difference. See Chapter 2 of \cite{HH} for their relationship. 

A triangular array with small deviations for martingale differences can be obtained as prescribed in \cite{HH}. Take $\mathcal{F}_{n,i}=\mathcal{F}_i$ and define $Z_{n,i}= s_n^{-1} Z_i$ for $1 \leq i \leq n$ where $s_n$ is the standard deviation of $Z_n$. In this way, the expectation of the conditional variance is normalized to be $1$. 

The central limit theorem reads:
\begin{theorem}\textnormal{(\cite{HH})} \label{thm2}
Let $\{Z_{n,i}, \mathcal{F}_{n,i}, 1 \leq i \leq k_n, n\geq 1\}$ be a zero-mean, square integrable martingale array with differences $X_{n,i}:= Z_{n,i}-Z_{n,i\text{-}1},$ and $
\eta^2$ be an almost surely finite random variable. Suppose that 
\begin{gather}
\text{ for all } \epsilon>0, \quad \sum_{i=1}^{k_n}\mathbf{E}[X_{n,i}^2 I(|X_{n,i}|>\epsilon)|\mathcal{F}_{n,i\text{-}1}] \overset{p}{\to} 0, \label{cona1}\\
\sum_{i=1}^{k_n} \mathbf{E}[X_{n,i}^2 | \mathcal{F}_{n, i\text{-}1}] \overset{p}{\to} \eta^2 \label{cona2}  
\end{gather}
and the $\sigma$-fields are nested, i.e., $\mathcal{F}_{n,i\text{-}1} \subseteq \mathcal{F}_{n,i}$ for $1 \leq i \leq k_n, n\geq 1. $\\
Then $Z_{n, k_n} = \sum_i X_{n,i} \overset{d}{\to} Z,$ where $Z$ has characteristic function $\mathbf{E} [\exp\left(-\frac{1}{2} \eta^2 t^2 \right)].$
\end{theorem}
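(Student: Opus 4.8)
Theorem~\ref{thm2} is the Hall--Heyde martingale central limit theorem---a conditional, stably--convergent version of McLeish's theorem---quoted from \cite{HH}; as it is a standard result I will only sketch the classical argument, which proceeds by convergence of characteristic functions. Fix $t\in\mathbb{R}$, and write $S_{nj}=\sum_{i\le j}X_{ni}$, $\sigma_{ni}^{2}=\mathbf{E}[X_{ni}^{2}\mid\mathcal{F}_{n,i-1}]$ and $U_{nj}^{2}=\sum_{i\le j}\sigma_{ni}^{2}$, so that \eqref{cona2} reads $U_{nk_n}^{2}\overset{p}{\to}\eta^{2}$ and $Z_{nk_n}=S_{nk_n}$; the goal is
\begin{equation*}
\mathbf{E}\bigl[\exp(itZ_{nk_n})\bigr]\;\longrightarrow\;\mathbf{E}\bigl[\exp(-\tfrac12\eta^{2}t^{2})\bigr],
\end{equation*}
after which L\'evy's continuity theorem gives the stated convergence in distribution. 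First I would reduce to the case of a uniformly bounded conditional--variance process: for fixed $K>0$, stop the array at $\tau_n=\inf\{j:U_{n,j+1}^{2}>K\}$, which is a genuine stopping time for $(\mathcal{F}_{nj})_j$. This is the one place the nestedness hypothesis $\mathcal{F}_{ni}\subseteq\mathcal{F}_{n+1,i}$ is used, ensuring that the stopped array is still a martingale--difference array satisfying \eqref{cona1}--\eqref{cona2} (with $\eta^{2}$ replaced by $\eta^{2}\wedge K$). After stopping $U_{nk_n}^{2}\le K$, and by \eqref{cona2} the stopped and original arrays agree on an event of probability tending to $\mathbf{P}(\eta^{2}\le K)$, so letting $K\to\infty$ at the end removes the restriction.

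With bounded conditional variance I would next truncate the differences at a small level $\delta$ (replacing $X_{ni}$ by its centered truncation): by \eqref{cona1} this alters $S_{nk_n}$ by a term whose $L^{2}$--norm tends to $0$ and alters the conditional variances by a term that tends to $0$ in probability, so it suffices to treat differences bounded by $2\delta$. Then I would introduce McLeish's auxiliary product
\begin{equation*}
T_{n}=\prod_{i=1}^{k_n}\bigl(1+itX_{ni}\bigr),
\end{equation*}
a complex martingale with $\mathbf{E}[T_n]=1$ (since $\mathbf{E}[X_{ni}\mid\mathcal{F}_{n,i-1}]=0$) that is uniformly integrable, because $\mathbf{E}|T_n|^{2}=\mathbf{E}\prod_i(1+t^{2}X_{ni}^{2})\le\mathbf{E}\exp\bigl(t^{2}\sum_i X_{ni}^{2}\bigr)\le e^{t^{2}K}$. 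Taking logarithms, $\log(1+itX_{ni})=itX_{ni}+\tfrac12 t^{2}X_{ni}^{2}+\rho_{ni}$ with $|\rho_{ni}|\le C(t)\,\delta\,X_{ni}^{2}$, hence
\begin{equation*}
\exp(itZ_{nk_n})=T_{n}\exp\Bigl(-\tfrac12 t^{2}\textstyle\sum_i X_{ni}^{2}-\sum_i\rho_{ni}\Bigr).
\end{equation*}
Condition \eqref{cona1} forces $\max_i|X_{ni}|\overset{p}{\to}0$ (so $\sum_i\rho_{ni}\overset{p}{\to}0$) and $\sum_i X_{ni}^{2}-U_{nk_n}^{2}\overset{p}{\to}0$, whence $\exp(itZ_{nk_n})=T_{n}\exp(-\tfrac12 t^{2}U_{nk_n}^{2})(1+\varepsilon_n)$ with $\varepsilon_n$ bounded and tending to $0$ in probability. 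Taking expectations and using uniform integrability of $T_n$, the $\varepsilon_n$ contribution vanishes, so $\mathbf{E}[\exp(itZ_{nk_n})]-\mathbf{E}\bigl[T_n\exp(-\tfrac12 t^{2}U_{nk_n}^{2})\bigr]\to0$. Were $\eta^{2}$ constant, the facts $\exp(-\tfrac12 t^{2}U_{nk_n}^{2})\to e^{-\tfrac12 t^{2}\eta^{2}}$ in $L^{1}$ and $\mathbf{E}[T_n]=1$ would finish it at once; for random $\eta^{2}$ one additionally needs that $T_n\to1$ \emph{stably}, i.e. $\mathbf{E}[T_nY]\to\mathbf{E}[Y]$ for every bounded $Y$ measurable with respect to $\bigcup_m\mathcal{F}_{nm}$---once more a consequence of nestedness---applied to $Y=\exp(-\tfrac12 t^{2}\eta^{2})$.

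The step I expect to be the main obstacle is this last one: evaluating $\mathbf{E}\bigl[T_n\exp(-\tfrac12 t^{2}U_{nk_n}^{2})\bigr]$ when the limiting variance $\eta^{2}$ is a nondegenerate random variable, which is exactly what separates the present statement from the classical CLT and which forces both the stable--convergence/decorrelation argument and the nestedness assumption. A secondary technical point is making the logarithmic expansion uniform over the $k_n\to\infty$ factors of $T_n$: this succeeds only because the per-factor errors $\rho_{ni}$ and the discrepancy $\sum_i X_{ni}^{2}-U_{nk_n}^{2}$ are measured in units of $X_{ni}^{2}$, while $\sum_i X_{ni}^{2}$ stays bounded after Step~1 and $\max_i|X_{ni}|\to0$ by \eqref{cona1}, so the accumulated error is genuinely negligible. (As a cross-check one might instead argue via the Skorokhod embedding---realizing each martingale as a time--changed Brownian motion $B_{U_{nk_n}^{2}}$ and invoking $U_{nk_n}^{2}\overset{p}{\to}\eta^{2}$ with path continuity---but the passage from a single martingale to the array requires extra bookkeeping.)
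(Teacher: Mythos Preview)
The paper does not prove Theorem~\ref{thm2}; it is quoted as a known result from Hall--Heyde \cite{HH} and invoked as a black box in the proof of Theorem~\ref{main}, so there is no in-paper argument to compare against. Your sketch is a faithful outline of the standard McLeish/Hall--Heyde characteristic-function proof (stopping to bound the conditional variance, truncating the differences, the multiplicative martingale $T_n=\prod_i(1+itX_{ni})$, and stable convergence to handle random $\eta^2$), and would stand on its own as a proof sketch. One small correction: the nestedness hypothesis is not really needed for the stopping step, since $\tau_n$ is defined row by row and depends only on $(\mathcal{F}_{nj})_j$; its genuine role is exactly the stable-convergence step you flag at the end, where one needs $\eta^2$ to be measurable with respect to a $\sigma$-field that the array can ``see'' in the limit.
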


\begin{proof}[Proof of the asymptotic normality in Theorem \ref{main}]
We verify the conditions of Theorem \ref{thm2}. Take the row length $k_n$ of the array in the theorem to be equal to $n$. Notice that $s_n=n\sqrt{\frac{n+1}{12}}$ in our case.  Looking at the martingale differences, $X_{n,i}:= Z_{n,i}-Z_{n,i\text{-}1},$ we have
\begin{equation}\label{defe}
X_{n,i}= \frac{\sqrt{12}}{n\sqrt{n+1}}  \left( iD_i -(i-1) D_{i-1} -(i-1)  \right).
\end{equation}
Let $W_i:=D_i-\frac{i-1}{2}$ be the zero-mean random variable for the number of descents. Then
\begin{equation} \label{wi}
\begin{aligned}
  X_{n,i}|\mathcal{F}_{n,i \text{-}1}= &   \frac{\sqrt{12}}{n\sqrt{n+1}} 
   \begin{cases}
     W_{i-1}-\frac{i}{2},&  \text{with prob. } \frac{1}{2}+\frac{W_{i \text{-} 1}}{i},\\
       W_{i-1}+\frac{i}{2},& \text{with prob. } \frac{1}{2}-\frac{W_{i-1}}{i}.
    \end{cases}
\end{aligned}
\end{equation}

We first verify the Lindeberg condition (\ref{cona1}). By \eqref{defe} and the fact that $D_i\leq i-1,$ we have $|X_{n,i}|\leq \frac{\sqrt{12} \, i}{n\sqrt{n+1}}.$ So, for given $\epsilon>0$ and sufficiently large $n$, $P(|X_{n,i}|>\epsilon)=0$ where $1 \leq i \leq n-1.$ Hence, (\ref{cona1}) is satisfied. 

Next we verify the convergence of conditional variance (\ref{cona2}). By (\ref{wi}),
\begin{equation*}
\mathbf{E}[X_{n,i}^2 | \mathcal{F}_{n,i\text{-}1}] =  \frac{12}{n^2 (n+1)^2}\left( \frac{i^2}{4} - W_{i \text{-} 1}^2 \right).
\end{equation*}
Then the conditional variance for $Z_{nn}$ is 
 \begin{equation*} 
\begin{aligned}
V_{n,n}^2:=\sum_i \mathbf{E}[X_{n,i}^2 | \mathcal{F}_{n,i\text{-}1}] &=  \frac{12}{n^2 (n+1)^2} \left(\sum_i \frac{i^2}{4} - \sum_i W_{i \text{-} 1}^2 \right), \\
&= 1+ \frac{1}{2n} - \frac{12}{n^2(n+1)} \sum_i W_{i \text{-} 1}^2. 
\end{aligned}
\end{equation*}
Observe that
\begin{align*}
\mathbf{E}(V_{n,n}^2) &=1+ \frac{1}{2n}-\frac{12}{n^2(n+1)} \sum_i \mathbf{E}(W_{i \text{-} 1}^2) \\
&= 1+ \frac{1}{2n}-\frac{12}{n^2(n+1)} \sum_i \frac{i}{12} \\
&=1. 
\end{align*}
Below, we use Cauchy-Schwarz inequality and \eqref{fourth} to show that the variance of $V_{nn}^2$ converges to $0$. 
\begin{align*}
\mathbf{E}\left[\left(V_{n,n}^2-1-\frac{1}{2n}\right)^2\right] &=\frac{144}{n^4 (n+1)^2} \left( \sum_{1 \leq i,j\leq n\text{-}1} \mathbf{E}\left(W_{i \text{-} 1}^2 W_{j \text{-} 1}^2\right)  \right)\\
& \leq \frac{144}{n^4 (n+1)^2} \left( \sum_{1 \leq i,j\leq n\text{-}1} \sqrt{\mathbf{E}(W_{i \text{-} 1}^4) \mathbf{E}( W_{j \text{-} 1}^4)}  \right)\\
 & \leq  \frac{C}{n^4 (n+1)^2} \left(\sum_{1 \leq i,j\leq n\text{-}1}  i j \right)\\
& \leq \frac{C  }{ (n+1)^2} \rightarrow 0.
\end{align*}
Finally, Chebyschev's inequality gives that $V_{n,n}^2=\sum_i \mathbf{E}[X_{n,i}^2 | \mathcal{F}_{n,i\text{-}1}]$ converges to $1$ in probability. Both conditions are verified; the result follows from Theorem \ref{thm2}.
\end{proof}

\section{Rate of Convergence} \label{Bolt}

The order of convergence in the central limit theorem for sums of independent random variables is known to be $n^{-1/2}$ under mild conditions and the uniform boundedness assumption of the third moments; see Chapter 5 of \cite{Pet}. While the same assumption gives a rate of order $n^{-1/4}$ at best in the case of martingale differences due to the dependency \cite{HH}. Bolthausen \cite{Bh} developed a technique, which is based on Bergstr\"om's inductive method \cite{Ber}, that yields rates comparable to the former case by imposing the uniform boundedness of the fourth conditional moment. A restriction compared to the independency case is that the variances of martingale differences are assumed to be asymptotically constant, i.e., $\lim_n \mathbf{E}[X_n^2] = \sigma^2 > 0.$ This assumption turns out to be restrictive in some combinatorial applications. For instance, in the case of the number of descents, the above quantity grows at a rate of $n^2.$ Another example is the martingale differences for the character ratios of the symmetric group where the variance grows linearly in $n$ \cite{F2}, and several other examples are given in Section \ref{App}. 

Various refinements in the result mentioned above have been obtained by modifications of the Bolthausen's technique (see, e.g., \cite{Hae, Re, RR}), mostly regarding the assumption on the fourth conditional moment. To the best of authors's knowledge, the time dependence of the variance has not been addressed. We show that the assumption can be relaxed to allow the variance of martingale differences to grow at a polynomial rate while the same rate of convergence, $n^{-1/2},$ is maintained. 

Let $X_i=Z_i-Z_{i-1}$ and define $\sigma_i^2= \mathbf{E}[X_i^2].$  Observe that $s_n^2= \mathbf{E}[Z_n^2]=\sum_{i=1}^n \sigma_i^2$ which follows from the fact that $\mathbf{E}[X_i X_j]=0$ for $i\neq j$ as $X_i$ and $X_j$ are martingale differences. The condition
\begin{equation}\label{maincond}
1 \leq \liminf_n \sqrt{n}\frac{\sigma_{n+1}}{s_n}\leq\limsup_n \sqrt{n}\frac{\sigma_{n+1}}{s_n} < \infty
\end{equation}
replaces Assumption (1.2) in \cite{Bh}, which can equivalently be stated as $\lim_{n} \sqrt{n}\sigma_{n+1} / s_n =1$. The theorem is as follows:
\begin{theorem}\label{convthm}
Let $\{Z_{n}, \mathcal{F}_{n}, n\geq 1\}$ be a zero-mean martingale with martingale differences $X_{i}$ that satisfy \eqref{maincond} and  $Y_i=X_i/\sigma_i.$ If 
\begin{gather}
\sup_i \|\mathbf{E}(Y_{i}^4 | \mathcal{F}_{i-1}) \|_\infty < \infty, \label{bound4}\\
\sup_i \sqrt[2p']{i} \, \|\mathbf{E}(Y_{i}^3 | \mathcal{F}_{i-1}) \|_{p'} < \infty , \label{bound3}\\
 \sup_i \sqrt{i}\,  \|\mathbf{E}(Y_{i}^2 | \mathcal{F}_{i-1})-1 \|_{p} < \infty  \label{bound2} 
\end{gather}

for some $p,p' > 1,$ then 

\begin{equation*}
\sup_{t \in \mathbb{R}} |P(Z_n/s_n \leq t) - \Phi(t) | \leq \frac{C}{\sqrt{n}}
\end{equation*}
where $C$ is a constant independent of $n$ and $\Phi$ is the standard normal distribution.
\end{theorem}
The proof of Theorem \ref{convthm} is an adaptation of Bolthausen's technique. Its starting point is the Lindeberg's swapping argument. The key component of the proof is the recursive bound for the Kolmogorov distance between the martingle and the normal distribution. Since the proof involves technical details, such as term-by-term estimates in the Taylor approximation, we postpone it to Section \ref{Pf}. Here we use the result to prove the main theorem stated in the Introduction.
 
\begin{proof}[Proof of Theorem \ref{main}] We do not have recourse to the triangular arrays as in the previous section, so we do not have the additional scaling factor for the martingale differences, i.e.,
\begin{equation} \label{differance}
X_i | \mathcal{F}_{i-1} =\begin{aligned}
   &  
   \begin{cases}
     W_{i-1}-\frac{i}{2},&  \text{with prob. } \frac{1}{2}+\frac{W_{i \text{-} 1}}{i},\\
       W_{i-1}+\frac{i}{2},& \text{with prob. } \frac{1}{2}-\frac{W_{i-1}}{i}.
    \end{cases}
\end{aligned}
\end{equation}
We then have
\begin{align*}
\sigma_i^2 &= \mathbf{E}[\mathbf{E}[X_{i}^2| \mathcal{F}_{i-1}]]= \mathbf{E}\left[\frac{i^2}{4} - W_{i-1}^2\right]= \frac{3i^2 -i}{12},\\
s_n^2&=\sum_{i=1}^n \sigma_i^2 = \frac{n^2 (n+1)}{12}.
\end{align*}
So the variance of martingale differences is of polynomial order. In fact $\lim_{n} \sqrt{n}\sigma_{n+1} / s_n=\sqrt{3};$ thus, the condition \eqref{maincond} is satisfied. Next, we verify the other assumptions of the Theorem \ref{convthm}. 
Regarding the fourth conditional moment,
\begin{align*}
\|\mathbf{E}[X_i^4|\mathcal{F}_{i-1}]\|_{\infty}&=\left\|\frac{i^4}{16}+\frac{i^2 W_{i-1}^2}{2} - 3 W_{i-1}^4\right\|_{\infty} = \mathcal{O}(i^4)
\end{align*}
as $|W_{i-1}|\leq \frac{i-2}{2}.$ Then since $\sigma_i$ is of order $i,$ $\|\mathbf{E}[Y_i^4|\mathcal{F}_{i-1}]\|_{\infty}$ is uniformly bounded. Hence, \eqref{bound4} is satisfied. For the next condition, we bound 
\begin{equation} \label{norm3}
\|\mathbf{E}[X_i^3|\mathcal{F}_{i-1}]\|_{p}^p= \mathbf{E}\left[\left|\frac{i^2 W_{i-1}}{2}-2 W_{i-1}^3 \right|^{p}\right].
\end{equation}
We take $p=4/3$ in $\eqref{norm3},$ then it is bounded by
\begin{align*}
& \mathbf{E}\left[\left(\sqrt[3]{\frac{i^2 |W_{i-1}|}{2}} +\sqrt[3]{2} \, |W_{i-1}| \right)^{4}\right] \\
 \leq &C \, \mathbf{E}(i^{8/3}|W_{i-1}|^{4/3}+i^{2}|W_{i-1}|^{2}+i^{4/3}|W_{i-1}|^{8/3}+i^{2/3}|W_{i-1}|^{10/3}+|W_{i-1}|^{4})\\
= &\mathcal{O}(i^{10/3}),
\end{align*}
where the last line follows from Lyapunov's inequality,
\begin{equation*}
\sqrt[r]{\mathbf{E}|W_{i-1}|^r} \leq \sqrt[4]{\mathbf{E}|W_{i-1}|^4} = \mathcal{O}(\sqrt{i})
\end{equation*}
for $0 <r <4.$ Therefore, $\sqrt[8/3]{i}\|\mathbf{E}[Y_i^3|\mathcal{F}_{i-1}]\|_{4/3}$ is also uniformly bounded. For the last condition, we take $p'=2,$ so we have
\begin{align*}
\|\mathbf{E}[X_i^2|\mathcal{F}_{i-1}]-\sigma_i^2\|_{2}^2 &= \mathbf{E}\left[\left|\frac{i}{12} - W_{i-1}^2\right|^{2}\right] \\
& \leq \mathbf{E}\left[\frac{i^2}{144} + \frac{i W_{i-1}^2}{6} + W_{i-1}^4 \right] \\
& = \mathcal{O}(i^2).
\end{align*}
Taking the square root of the above expression and dividing by $\sigma_i^2,$ we can show that it is of order less than $\sqrt{i},$ which verifies \eqref{bound2}. Hence, by Theorem \ref{convthm} we obtain the asymptotic normality with an error term of order less than or equal to  $n^{-1/2}.$
\end{proof}
Before we move on to martingale formulation of other combinatorial statistics, we briefly discuss another application of Theorem \ref{convthm} without going into details. As mentioned above, it is shown in \cite{F2} that the normalized characters of the symmetric group evaluated at transpositions form a martingale sequence with respect to the Plancherel measure on the conjugacy classes of the symmetric group. The asymptotic normality is shown therein, and the convergence rate of order $n^{-1/2}$ is obtained in \cite{F3} and also in \cite{SQ} by variations of Stein's method, which reads as
\begin{equation*}
\sup_{x\in \mathbb{R}} \left|\mathbf{P}\left( \frac{n-1}{2} \frac{\chi^{\lambda}(12)}{\dim(\lambda)}\leq x\right) - \Phi(x) \right| \leq \frac{C}{\sqrt{n}},
\end{equation*}
where $\lambda$ is a conjugacy class chosen with respect to Plancherel measure. The conditional moments up to fourth degree are obtained in \cite{F2}. By taking $p=p'=2$ in Theorem \ref{convthm} and using Lemma 2.3. in \cite{F3}, we can give another proof of the above result.  

\section{Recurrence relations and martingale formulation} \label{Recform}

The primary goal of this section is to extend the martingale techniques to different permutation statistics. In the Introduction, we obtained the martingale for the number of descents in random permutation by inserting the new entry at a random position. However, as we will see in the next section, there are examples that this idea is not readily applicable or even not possible at all. Before introducing those examples in the next section, we show how to derive martingales from a broader class of recurrences. 

\subsection{Recurrence relations to martingales}
 Consider the Eulerian polynomial
\begin{equation*}
A_n(t)=\sum_{\pi \in S_n}t^{\textnormal{des}(\pi)}=\sum_{k\geq 0} A_{n,k} t^k,
\end{equation*}
where $\textnormal{des}(\pi)$ is the number of descents in $\pi \in S_n.$ Another definition of the Eulerian polynomial is the generating function
\begin{equation*}
\frac{A_n(t)}{(1-t)^{n+1}}= \sum_{k \geq 0} (k+1)^n t^k.
\end{equation*}
The coefficients in $A_n(t)$ can be expressed recursively by simple manipulations of the above sum. Foata's survey \cite{Fo} gives eight different expressions for Eulerian numbers. Consider
\begin{equation*}
A_{n+1,k} = (k+1) A_{n,k} + (n-k+1) A_{n, k-1}.
\end{equation*}
Dividing both sides by $A_{n+1}(1)=(n+1)!,$
\begin{align*}
\frac{A_{n+1,k}}{(n+1)!} = \frac{k+1}{n+1} \frac{A_{n,k}}{n!} + \frac{n-k+1}{n+1} \frac{A_{n,k-1}}{n!}.
\end{align*}
Equivalently,
\begin{equation*}
\mathbf{P}(D_{n+1}=k) = \frac{k+1}{n+1} \mathbf{P}(D_{n}=k) +  \frac{n-k+1}{n+1} \mathbf{P}(D_{n}=k-1).
\end{equation*}
In order to relate it to martingales, we write the next equation. 
\begin{equation*}
\mathbf{P}(D_{n+1}=k+1) = \frac{k+2}{n+1} \mathbf{P}(D_{n}=k+1) +  \frac{n-k}{n+1} \mathbf{P}(D_{n}=k).
\end{equation*}
Observe that the coefficients of $\mathbf{P}(D_{n}=k)$ in subsequent recurrences above agree with the probabilities of the martingale defined in the Introduction.

We next find conditions for martingale representation in a general setting. Let $P_{n,k}$ be nonnegative integers for $n=1,2,\dots$ and $0 \leq k \leq n,$ which is referred as \textit{combinatorial array} in \cite{Pit}. Also define $P_n(t)=\sum_{k=0}^n P_{n,k}t^k$ as before. Suppose $\{P_{n,k}\}$ satisfies recurrences of the form
\begin{align*}
P_{n+1,k}&=\alpha_{n+1,k}^{(0)} P_{n,k}&+& \quad \alpha_{n+1,k}^{(1)} P_{n,k-1} &+\cdots+& \quad\alpha_{n+1,k}^{(s)} P_{n,k-s}, \\
P_{n+1,k+1}&=\alpha_{n+1,k+1}^{(0)} P_{n,k+1}&+& \quad \alpha_{n+1,k+1}^{(1)} P_{n,k} &+\cdots +& \quad\alpha_{n+1,k+1}^{(s)} P_{n,k-s+1}, \\
& \vdotswithin{=} \\
P_{n+1,k+s}&=\alpha_{n+1,k+s}^{(0)} P_{n,k+s}&+& \quad \alpha_{n+1,k+s}^{(1)} P_{n,k+s-1} &+\cdots +& \quad\alpha_{n+1,k+s}^{(s)} P_{n,k}, 
\end{align*}
where $\alpha^{(i)}_{n+1,k} \geq 0$ for all $0 \leq i \leq s.$ In addition, we want the diagonal coefficients to add up to $\frac{P_{n+1}(1)}{P_n(1)}$ for all $k$ i.e.,
\begin{equation} \label{diag}
\alpha^{(0)}_{n+1,k}+ \alpha^{(1)}_{n+1,k+1}+ \cdots + \alpha^{(s)}_{n+1,k+s} =\frac{P_{n+1}(1)}{P_n(1)}.
\end{equation}
Let $p_{n+1,i}:= \alpha^{(i)}_{n+1,k+i} \cdot \frac{1}{P_{n+1}(1)/P_n(1)}$ to ease notation. Define the following random process,
\begin{equation} \label{martin}
Z_{n+1}=\begin{aligned}
   &   
   \begin{cases}
     Z_{n},&  \text{with prob. } p_{n+1,0},\\
       Z_{n}+1,& \text{with prob. } p_{n+1,1}, \\
       \quad \quad \, \vdots \\
       Z_{n}+s,& \text{with prob. } p_{n+1,s}. \\
    \end{cases}
\end{aligned}
\end{equation}
We observe that \eqref{diag} and nonnegativity of the coefficients are sufficient to conclude that $Z_n$ is a $submartingale$ with respect to $\mathcal{F}_n=\sigma(Z_1,\cdots,Z_n).$  A submartingale is defined in the same way with a martingale, but the difference is that we replace $``\mathbf{E}(Z_{n}|\mathcal{F}_m)= Z_m$'' by the weaker condition: \,$``\mathbf{E}(Z_{n}|\mathcal{F}_m) \geq Z_m$'' for $n>m.$ 
 A general formula on how to obtain a martingale from $Z_n$ is not possible with no further information on the probabilities involved in the definition of $Z_n.$ Nevertheless, in Section \ref{App}, we give examples of descent-related statistics for which we derive martingales from the submartingale obtained in the way described above. A  counterexample to \eqref{diag} is Stirling numbers of second kind, which is recursively defined as
\begin{equation} \label{stir}
\Stirling{n+1}{k}=k \Stirling{n}{k}+ \Stirling{n}{k-1}.
\end{equation}
So the method above does not apply in this case. 

\subsection{Rational generating functions to recurrence relations}
The next goal of this section is to derive recurrence relations from the rational generating functions of the form 
\begin{equation}\label{rat}
\frac{P_{dn}(t)}{(1-t)^{dn+1}}=\sum_{k\geq 0} f_n(k) t^k, 
\end{equation}
and use it in the next section. We note that $f_n$ has degree $d(n+1)$ if and only if $P(1) \neq 0$ (see Corollary 4.3.1 of \cite{EC1}). These functions appear in many different contexts. The order polynomials of labelled posets has rational generating functions, \cite{EC1} where the classical Eulerian polynomial is associated with the special case of an anti-chain. Another example is Ehrhart polynomials of convex polytopes; if the polytope is taken to be the unit cube, the generating function has Eulerian polynomial in the numerator (see Chapter 4 of \cite{EC1} and Chapter 8 of \cite{PEN}).

Suppose $f_{n+1}(k)=g_n(k)f_n(k)$ for some degree $d$ polynomial $g_n(k)$ for all $n.$ For instance, $d=1$ and $g_n(k)=k+1$ for Eulerian numbers. In general, if $g_n(k)=\alpha_nk+\beta_n$ for some $\alpha_n, \beta_n \in \mathbb{R}$, we have
\begin{equation} \label{der}
\frac{P_{n}(t)}{(1-t)^{n+1}} = \alpha_n t \left(\frac{P_{n-1}(t)}{(1-t)^{n}}\right)' + \beta_n \frac{P_{n-1}(t)}{(1-t)^n}. 
\end{equation}
This implies
\begin{equation}\label{lwcond}
P_n(t)= ((\alpha_n n- \beta_n)t + \beta_n) P_{n-1}(t) + \alpha_n t (1-t) P_{n-1}'(t),
\end{equation}
and $P_n(1)=n! \prod_{i=1}^n \alpha_i.$ By Proposition 3.5. in \cite{LW}, $P_n(t)$ has only real roots if and only if the leading coeffcient $\alpha_n \geq 0$. In that case, the following theorem applies.

\begin{theorem}\textnormal{(\cite{B})} \label{Ben}
Let $P_n(t)= \sum_{k} P_{n,k} t^k$ be a polynomial with all non-positive real roots, and $X_n$ be a random variable such that $\mathbf{P}(X_n=k)= \frac{P_{n,k}}{P_n(1)}.$ If  $\text{var}(X_n)$  goes to infinity as $n \rightarrow \infty$, then $X_n$ is asymptotically normal.
\end{theorem}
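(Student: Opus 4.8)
The plan is to reduce $X_n$ to a sum of independent Bernoulli random variables and then apply the Lindeberg--Feller central limit theorem for triangular arrays; this is essentially Harper's argument. Since $P_n(t)$ has only non-positive real roots, write them as $-r_{n,1},\dots,-r_{n,d_n}$ with each $r_{n,j}\ge 0$ and $d_n=\deg P_n$. Factoring out the leading coefficient and normalizing by $P_n(1)=P_{n,d_n}\prod_j(1+r_{n,j})$ gives the probability generating function
\[
\mathbf{E}\!\left(t^{X_n}\right)=\frac{P_n(t)}{P_n(1)}=\prod_{j=1}^{d_n}\frac{t+r_{n,j}}{1+r_{n,j}}=\prod_{j=1}^{d_n}\Bigl((1-p_{n,j})+p_{n,j}\,t\Bigr),\qquad p_{n,j}:=\frac{1}{1+r_{n,j}}\in(0,1].
\]
Hence $X_n\overset{d}{=}\sum_{j=1}^{d_n}B_{n,j}$ with the $B_{n,j}$ independent and $B_{n,j}\sim\mathrm{Bernoulli}(p_{n,j})$, so that $\mathbf{E}(X_n)=\sum_j p_{n,j}$ and $\mathrm{var}(X_n)=\sum_j p_{n,j}(1-p_{n,j})=:s_n^2$. (Roots equal to $0$ give $p_{n,j}=1$, a deterministic summand that shifts the mean but contributes nothing to the variance, and is harmless.)

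Next I would verify the Lindeberg condition for the array $\{(B_{n,j}-p_{n,j})/s_n:1\le j\le d_n\}$. Because $|B_{n,j}-p_{n,j}|\le 1$ deterministically, and because $s_n^2\to\infty$ by hypothesis, for any fixed $\epsilon>0$ we have $\epsilon s_n>1$ for all large $n$, so the event $\{|B_{n,j}-p_{n,j}|>\epsilon s_n\}$ is empty and the Lindeberg sum $s_n^{-2}\sum_j\mathbf{E}\bigl[(B_{n,j}-p_{n,j})^2\mathbf{1}(|B_{n,j}-p_{n,j}|>\epsilon s_n)\bigr]$ is identically zero. (Equivalently, a Lyapunov bound works: $\sum_j\mathbf{E}|B_{n,j}-p_{n,j}|^3\le\sum_j\mathbf{E}(B_{n,j}-p_{n,j})^2=s_n^2$, so the Lyapunov ratio is at most $s_n^2/s_n^3=s_n^{-1}\to 0$.) The Lindeberg--Feller theorem then yields $(X_n-\mathbf{E}X_n)/s_n\overset{d}{\to}N(0,1)$, which is exactly the asserted asymptotic normality.

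There is no deep obstacle here; the only points requiring care are the root bookkeeping (degenerate factors with $p_{n,j}=1$, and the fact that real-rootedness is what makes the factorization into genuine probabilities possible) and the role of the hypothesis. The divergence $\mathrm{var}(X_n)\to\infty$ is precisely what makes the truncation empty and supplies the correct normalizing constant; note too that each $p_{n,j}(1-p_{n,j})\le\tfrac14$, so a bounded degree would force bounded variance, meaning the hypothesis implicitly forces $d_n\to\infty$ with enough non-degenerate factors.
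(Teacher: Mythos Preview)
Your argument is correct and is precisely Harper's method, which the paper itself alludes to in the Introduction. Note, however, that the paper does not give its own proof of this theorem: it is stated as a cited result from \cite{B} and used as a black box, so there is no in-paper proof to compare against. Your write-up stands on its own as a complete proof; the only cosmetic point is that Bender's original argument in \cite{B} proceeds via characteristic functions and singularity analysis rather than the Bernoulli decomposition, so strictly speaking you have reproduced Harper's proof rather than Bender's, but the paper treats them as interchangeable routes to the same fact.
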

\noindent Pitman in \cite{Pit} gives an account of polynomials with only real zeros in this context. 

If $g_n(k)$ is not linear in $k,$ the real-rootedness does not necessarily hold (see Proposition \ref{prop}). Next, we identify the coefficients of recurrences for an arbitrary degree of $g_n.$ An example where $g_n(k)$ is of degree 2 is the number of descents in fixed-point free involutions, which we study in the next section. 

It is easy to derive the following from \eqref{rat}.
 \begin{equation}\label{inv}
P_{dn,k}= \sum_{i=0}^k (-1)^i \binom{d(n+1)+1}{i} f_n(k-i).
\end{equation}
Working the expression above, Koutras \cite{Kt} obtained the formula
\begin{equation}\label{kt}
P_{n+1,k}=(k \alpha_n + \beta_n)P_{n,k}+(\alpha_n(n-k+2)-\beta_n)P_{n,k-1}
\end{equation}
where $(x)_k=x(x-1)\cdots (x-k+1)$ is the falling factorial and  $f_n(k)=\alpha_n k +\beta_n.$ The special cases of the identities below are used in \cite{Kt} to break \eqref{inv} into parts. The first one is
\begin{equation}\label{a2}
(i)_l \binom{d(n+1)+1}{i} = (d(n+1)+d)_l \binom{dn+d-l+1}{i-l},
\end{equation}
and the second one is a consequence of Vandermonde's identity, 
\begin{equation}\label{a1}
 \binom{dn+d-l+1}{i-l} = \sum_{m=l}^{d} \binom{d-l}{m-l}\binom{dn+1}{i-m}.
\end{equation}
\noindent We refer to them later in the section.

 Now consider the operator $\Delta=\sum_{k\geq 0} \frac{(-1)^k}{k!}D^k$ (not to be confused with the difference operator), where $D$ is the usual differentiation. $\Delta^i$ means applying $\Delta$ $i$ times. It acts on monomials by shifting, i.e. $\Delta^i x^n = (x-i)^n,$ which can be shown inductively, e.g., in \cite{Fo}. At the same time, it can be interpreted as Taylor expansion, i.e.,
\begin{equation*}
\Delta^if(k)=\sum_{j=0}^{\infty}(-1)^j \frac{i^j}{j!}f^{(j)}(k).
\end{equation*}
Given that $f_{n+1}(k)=g_n(k)f_n(k)$ where $g_n(k)$ is a polynomial of degree $d,$ we have
\begin{align}
P_{d(n+1),k} &= \sum_{i=0}^k (-1)^i \binom{d(n+1)+1}{i} g_n(k-i) f_{n}(k-i)  \notag \\
 &= \sum_{i=0}^k (-1)^i \binom{d(n+1)+1}{i} \Delta^i g_n(k) f_{n}(k-i) \label{p2}
\end{align}
 by \eqref{inv}. In order to get use of \eqref{a2}, we change the basis as
\begin{equation*}
x^n=\sum_{k=0}^n \Stirling{n}{k} (x)_k,
\end{equation*}
where $\Stirling{n}{k}$ is Stirling number of the second type. Then, we have
\begin{align*}
\Delta^ig_n(k)&=\sum_{j=0}^d(-1)^j \frac{i^j}{j!}g_n^{(j)}(k) \\
&= \sum_{j=0}^d \frac{(-1)^j}{j!}g_n^{(j)}(k) \sum_{l=0}^j \Stirling{j}{l} (i)_l \\
&= \sum_{l=0}^d \left( \sum_{j=l}^d  \frac{(-1)^j}{j!}g_n^{(j)}(k)  \Stirling{j}{l} \right) (i)_l.
\end{align*}

Plugging the expression above in \eqref{p2} and applying \eqref{a1} and \eqref{a2} accordingly, we arrive at
\begin{equation} \label{recrec}
P_{d(n+1),k}= \sum_{m=0}^d (-1)^m \left( \sum_{l=0}^m (d(n+1)+1)_l \binom{d-l}{m-l} [(i)_l]\Delta^ig_n(k)  \right) P_{dn, k-m},
\end{equation}
where 
\begin{equation*}
[(i)_l]\Delta^ig_n(k)=\sum_{j=l}^d  \frac{(-1)^j}{j!}g_n^{(j)}(k)  \Stirling{j}{l}
\end{equation*}
is the coefficient of $(i)_l$ in $\Delta^ig_n(k).$ This gives a rather complicated but an explicit recurrence relation for combinatorial arrays defined in the rational form \eqref{rat} for which $f_{n+1}(k)/f_n(k)$ is a polynomial of $k.$ 


\section{Applications} \label{App}
The first two examples are simple cases, in the sense that they can be written as sums of independent random variables (which is shown in \cite{Fe}) by the same formulation for descents. 

\subsection{Inversions}

The number of inversions in a permutation $\pi$ is defined to be the number of all pairs $i \leq j$ such that $\pi(j) < \pi(i).$ Let $I_n$ be the random variable counting the number of inversions in a randomly chosen permutation from $S_n.$ Inserting $n+1$ in any possible $n+1$ positions and counting the probabilities, we obtain
\begin{equation*}
I_{n+1}= \left\lbrace I_n+i \text{ with probability } \frac{1}{n+1}, 0 \leq i \leq n \right\rbrace.
\end{equation*}

Then subtracting the mean of $I_n,$ we have the zero-mean martingale below.
\begin{equation*}
Z_n :=  I_n - \frac{\binom{n}{2} }{2}.
\end{equation*}
In the triangular array setting of Section \ref{clt}, the martingale difference $X_{n,i}=\frac{1}{s_n}(Z_{n,i}-Z_{n, i \text{-}1})$ satisfy
\begin{equation} \label{invdif}
X_{n,i}|\mathcal{F}_{n,i \text{-}1}=\frac{1}{s_n}\left\lbrace j-\frac{i-1}{2} \text{  with probability  } \frac{1}{i}, 0 \leq j \leq i-1 \right\rbrace,
\end{equation}
which are indeed independent random variables for $1 \leq i \leq n.$ Therefore, we can use classical limit theorems for the sums of independent random variables. The Lindeberg condition suffices to show the asymptotic normality (see Chapter 10 of \cite{Fe}).

We first calculate the variance of the differences,
\begin{equation*}
\mathbf{E}[X_{n,i}^2 | \mathcal{F}_{n,i \text{-}1}]= \frac{1}{s_n^2} \sum_{j=0}^{i-1} \frac{(j- \frac{i-1}{2})^2}{i }=\frac{1}{s_n^2}\frac{i^2-1}{12}.
\end{equation*}
Since $V_{n,n}^2=\sum_i \mathbf{E}[X_{n,i}^2 | \mathcal{F}_{n,i\text{-}1}]$ is deterministic in this case and its expected value is $1,$ we have 
\begin{equation*}
s_n^2=\sum_i  \frac{i^2-1}{12} = \frac{n(2n+5)(n-1)}{72}.
\end{equation*}
So that $|X_{n,i}| \leq C \frac{1}{\sqrt{n}},$ the Lindeberg condition \eqref{cona1} in Theorem \ref{thm2} can be verified by choosing $\epsilon>0$ small enough. It can also be shown that the absolute third moment is bounded, so the Berry-Esseen theorem for the sum of independent random variables gives an error term of order $n^{-1/2}.$

\subsection{Cycles}

Let $Q_n$ denote the number of cycles of a uniformly chosen random permutation in $S_n.$ Goncharov \cite{G} shows a central limit theorem for $Q_n$ considering it asymptotically as sum of Poisson distributions of the number of fixed length cycles. Suppose we insert $n+1$ either in any of the cycles of a given permutation, or place it as a fixed-point. This defines the zero-mean martingale
\begin{equation*}
Z_n=Q_n - \sum_1^n \frac{1}{k}.
\end{equation*}
Observe that $\mathbf{E}(Q_n) \sim \log n.$ Then
\begin{equation*}
X_{n,i}|\mathcal{F}_{n,i \text{-}1}=   \frac{1}{s_n}
    \begin{cases}
      1- \frac{1}{i}  &\text{ with prob. } \frac{1}{i},\\
      -\frac{1}{i}    &\text{ with prob. } \frac{i-1}{i}.
    \end{cases}
\end{equation*}
The conditional variance of the $i$th differences is
\begin{equation*}
\mathbf{E}[X_{n,i}^2 | \mathcal{F}_{n,i \text{-}1}]= \frac{1}{s_n^2} \left( \frac{i-1}{i^2} \right),
\end{equation*}
and 
\begin{equation*}
s_n^2= \sum_{i=1}^n \frac{i-1}{i^2} \sim \log n.
\end{equation*}
The Lindeberg condition is easily verified, since $|X_{n,i}| \leq \frac{1}{\sqrt{\log n}}.$ The asymptotic normality follows.
\subsection{Signed permutations}
The notion of descent in relation with group structure has its generalization in Coxeter groups (see Chapter 11 and 13 of \cite{PEN}). A particular case other than the symmetric group that we are interested is  Coxeter groups of type B, also known as hyperoctahedral groups. We denote them by $W_n$ for $n=1,2,\cdots$. They are isomorphic to the set of \textit{signed permutations} with usual multiplication in $S_n$.  A signed permutation $\pi$ is defined to be the mapping
\begin{equation*}
\pi : \{-n, \cdots,-1,0,1,\cdots,n \} \rightarrow \{-n, \cdots,-1,0,1,\cdots,n \}
\end{equation*}
which satisfies $\pi(-i)=-\pi(i).$ The descent set of $\pi$ is defined to be 
$ \{ 0 \leq i \leq n-1 : \pi(i)< \pi(i+1) \}.$ Unlike in the case of $S_n$, we also take the zeroth position into account. It is clear that the probability that $\pi$ has a descent at position $i$ is $\frac{1}{2}.$ Let $B_n$ count the number of descents of a random signed permutations. Then the linearity of expectation gives $\mathbf{E}(B_n)=\frac{n}{2}.$ Provided that 
\begin{equation*}
B_n(t):= \sum_{\pi \in W} t^{\textnormal{des}(\pi)}=\sum_{k=1}^n B_{n,k} t^k,
\end{equation*}
we have (Theorem 13.3. of \cite{PEN})
\begin{equation*}
\frac{B_n(t)}{(1-t)^{n+1}} = \sum_{k\geq 0} (2k+1)^n t^k,
\end{equation*}
and also the recurrence relation
\begin{equation*}
B_{n+1,k}=(2k+1)B_{n,k}+(2n-2k+3)B_{n,k-1}.
\end{equation*}
It satisfies the condition \eqref{diag}, so we have the submartingale below. 
\begin{equation*} 
B_{n+1}=\begin{aligned}
   &   
   \begin{cases}
     B_{n},&  \text{with prob. } \frac{2B_n+1}{2n+2},\\
       B_{n}+1,& \text{with prob. } \frac{2n-2B_n+1}{2n+2}.
    \end{cases}
\end{aligned}
\end{equation*}
After normalization,  $Z_n=n \left(B_n-\frac{n}{2}\right)$ is a zero-mean martingale. 
In order to derive the moments, we first use \eqref{lwcond} and obtain
\begin{equation*}
B_n(t)=(1+(2n-1)t)B_{n-1}(t) + 2t(1-t)B_{n-1}'(t).
\end{equation*}
Then, we can use method of moments observing that
\begin{equation*}
\mathbf{E}(B_n)=\frac{B_n'(1)}{B_n(1)}.
\end{equation*}
Similarly,
\begin{equation*}
\mathbf{E}(B_n^2)=\frac{B_n''(1)+B_n'(1)}{B_n(1)}.
\end{equation*}
We do not carry out the calculations, but point out that the leading terms of the moments are asymptotically same with the classical Eulerian numbers'. Consider the generating function
\begin{equation*}
B(u,t)=\sum_{n \geq 0} B_n(t) \frac{u^n}{n!} = \frac{(t-1)e^{u(t-1)}}{t-e^{2u(t-1)}},
\end{equation*}
which can be found in Chapter 13 of \cite{PEN}. It has a simple pole at $r(t)=\frac{\log t}{2(t-1)},$ which is a constant multiple of the simple pole of $A(u,t),$ the generating function for Eulerian numbers (see Section 9.6. of \cite{FS} for an accessible singularity analysis for combinatorial arrays). Then, Theorem 1 in \cite{B} verifies the claim.  

In fact, Eulerian polynomials for all Coxeter groups have only real roots, so that Theorem \ref{Ben} applies. The last open case was for Coxeter groups of type D, which is shown in \cite{SV}. A simple recurrence relation for Eulerian numbers for Coxeter groups of type $D$ is not available to the best of author's knowledge. But their relation to the first two types' is rather simple,
\begin{equation*}
D_{n,k} = B_{n,k} - n 2^{n-1}A_{n,k-1}.
\end{equation*}

\subsection{Stirling permutations}
Gessel and Stanley define Stirling polynomials, $f_n(k)=\Stirling{n+k}{k},$ and their generating function
\begin{equation*}
\frac{C_n(t)}{(1-t)^{n+1}} = \sum_{k\geq 0} \Stirling{n+k}{k} t^k,
\end{equation*}
in \cite{GS}, and they provide a combinatorial interpretation for $C_n(t).$ The coefficient of $t^k$ in $C_n(t),$ call it $C_{n,k},$ is the number of \textit{Stirling permutations} with exactly $k$ descents. Stirling permutations are permutations of the multiset $\{1,1,2,2,\cdots,n,n\}$ such that the numbers between two occurences of $i$ are larger than $i$ for all $1\leq i \leq n.$  The numbers $\{C_{n,k}\}$, known as second-order Eulerian numbers, appear in different branches of combinatorics (see \cite{HV}).

By \eqref{stir}, it can be easily shown that
\begin{equation} \label{recc}
C_{n}(t)=(2n-1)t C_{n-1}(t) + t(1-t)C_{n-1}'(t).
\end{equation}
Let $C_n$ be the random variable counting descents in Stirling permutations. Then working the coefficients, we obtain the submartingale

\begin{equation*} 
C_{n+1}=\begin{aligned}
   &   
   \begin{cases}
     C_{n},&  \text{with prob. } \frac{C_n}{2n+1},\\
       C_{n}+1,& \text{with prob. } \frac{2n-C_n +1}{2n+1}.
    \end{cases}
\end{aligned}
\end{equation*}
As in the previous example, the moments can be studied by \eqref{recc}. Note that the number of Stirling permutations is $C_n(1)=(2n-1)!!\equiv 1 \cdot 3 \cdots (2n-1).
$  For the first moment, we have
\begin{equation*}
\mathbf{E}(C_n)=1+ \frac{2n-2}{2n-1} \, \mathbf{E}(C_{n-1}),
\end{equation*}
which does not yield a simple expression. But a purely combinatorial count for the first moment by B\'ona leads him to this curious identity
\begin{equation*}
\mathbf{E}(C_n)=\sum_{k=0}^{n-1} \prod_{i=1}^k \frac{2n-2i}{2n-2i+1} = \frac{2n+1}{3}
\end{equation*}
in \cite{BoS}. It is also shown therein that $C_n(t)$ has only real zeros. Also, observe that \eqref{recc} satisfies \eqref{lwcond} with a positive coefficient. Therefore, Theorem \ref{Ben} is applicable to show asymptotic normality. 
 
\subsection{Alternating runs}
For $\pi \in S_n,$ $\pi$ is said to change direction at position $i,$ if either $ \pi(i-1) > \pi(i) < \pi(i+1)$ or $ \pi(i-1) < \pi(i) > \pi(i+1).$Then $\pi$ has $k-$\textit{alternating runs} if there exist exactly $k-1$ positions that it changes direction (see Section 1.2 of \cite{Bo}). Let $G_{n,k}$ be the number of k-alternating runs, then it is known to satisfy the recurrence relation


\begin{equation} \label{run}
G_{n+1,k}= k G_{n,k} + 2 G_{n,k-1} + (n-k+1) G_{n,k-2}.
\end{equation}
\begin{equation*} 
G_{n+1}=\begin{aligned}
   &   
   \begin{cases}
     G_{n},&  \text{with prob. } \frac{G_n}{n+1},\\
       G_{n}+1,& \text{with prob. } \frac{2}{n+1},\\
       G_{n}+2,&  \text{with prob. } \frac{n-G_{n}-1}{n+1}.
    \end{cases}
\end{aligned}
\end{equation*}
We can calculate the first moment simply by observing that the probability of $\pi$ to change direction at each position $i$ is $\frac{2}{3}.$ We have 
$\mathbf{E}(G_n)=\frac{2}{3}(n-2)+1$, whence we have the zero-mean martingale
\begin{equation*}
Z_n=\frac{n(n-1)}{2} \left( G_n - \frac{2n-1}{3} \right).
\end{equation*}
Another important statistic, which is closely related to alternating runs is the length of the longest alternating subsequences. Let $L_n$ be the corresponding random variable. We have the simple relation \cite{St},
\begin{equation} \label{alt}
L_{n,k} = \frac{1}{2}(G_{n,k-1}+G_{n,k}).
\end{equation}
Putting \eqref{run} and \eqref{alt} together, we have
\begin{equation*}
L_{n+1,k}= k L_{n,k} + L_{n,k-1} + (n-k+2) L_{n-1,k-2},
\end{equation*}
which defines the submartingale
\begin{equation*} 
L_{n+1}=\begin{aligned}
   &   
   \begin{cases}
     L_{n},&  \text{with prob. } \frac{L_n}{n+1},\\
       L_{n}+1,& \text{with prob. } \frac{1}{n+1},\\
       L_{n}+2,&  \text{with prob. } \frac{n-L_n}{n+1}.
    \end{cases}
\end{aligned}
\end{equation*}
Similarly, $Z_n=\frac{n(n-1)}{2} \left( L_n - \frac{4n+1}{6} \right)$ is a zero-mean martingale. In the setting of Section \ref{Bolt}, the martingale difference $X_i=Z_{i}-Z_{i-1}$ satisfies
\begin{equation*} 
X_{i}|\mathcal{F}_{i \text{-}1}= \frac{i-1}{2}\begin{aligned}
   &   
   \begin{cases}
     2L_{i-1}-2i+1,&  \text{with prob. } \frac{L_{i-1}}{i},\\
       2L_{i-1}-i+1,& \text{with prob. } \frac{1}{i},\\
       2L_{i-1}+1,&  \text{with prob. } \frac{i-L_{i-1}-1}{i}.
    \end{cases}
\end{aligned}
\end{equation*}
Different techniques are used to show the asymptotic normality \cite{Ho, St, WS,W} of $L_n$. In order to study the convergence rate in the limit, the higher moments of $L_n$ can be computed from the factorial moment generating function given in \cite{St} to check if the conditions of Theorem \ref{convthm} are satisfied.

\subsection{Matchings} \label{match}

A \textit{matching} is a fixed-point free involution, a permutation consisting only of transpositions, in $S_{2n}.$ The asymptotic normality of the number of descents in matchings is addressed by Kim \cite{K}. It is proven by pointwise convergence of the moment generating function. We obtain the same result by martingales. Let 
\begin{equation*}
J_{2n}(t):= \sum_{\pi \in J_{2n}}t^{\textnormal{des}(\pi)} = \sum_{k=0}^{2n-1} J_{2n,k} t^k.
\end{equation*}
D\'esarm\'enien and Foata \cite{DF} derived the generating function below using algebraic properties of Schur functions. 
\begin{equation*}
\sum_{n \geq 0}J_n(t)\frac{u^n}{(1-t)^{n+1}} = \sum_{k\geq 0} \frac{t^k}{(1-u^2)^{\binom{k+1}{2}}}.
\end{equation*}
Note that $J_{n}(t)=0$ if $n$ is odd, and $J_{2n}(1)=(2n-1)!!.$ Equating the coefficients of $u^{2n}$ gives
\begin{equation*}
\frac{J_{2n}(t)}{(1-t)^{2n+1}}= \sum_{k \geq 0} \binom{\binom{k+1}{2}+n-1}{n} t^k.
\end{equation*}
It is tempting to ask if Theorem \ref{Ben} is applicable in this case. The answer is negative by the following proposition.

\begin{proposition}\label{prop}
$J_{2n}(t)$ is not log-concave.
\end{proposition}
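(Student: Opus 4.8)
The plan is to exhibit an explicit small value of $n$ (or a small range of $k$ for general $n$) where the log-concavity inequality $J_{2n,k}^2 \geq J_{2n,k-1}\,J_{2n,k+1}$ fails. Since we have the closed form
\begin{equation*}
\frac{J_{2n}(t)}{(1-t)^{2n+1}} = \sum_{k\geq 0}\binom{\binom{k+1}{2}+n-1}{n}t^k,
\end{equation*}
the inversion formula \eqref{inv} (with $d=2$, so $d(n+1)+1 = 2n+3$) gives
\begin{equation*}
J_{2n,k} = \sum_{i=0}^{k}(-1)^i\binom{2n+3}{i}\binom{\binom{k-i+1}{2}+n-1}{n}.
\end{equation*}
First I would specialize to the smallest nontrivial case, $n=2$ (i.e. matchings in $S_4$), where $J_4(t) = 3 + 8t + 6t^2 + \cdots$ can be read off directly or computed from the formula above, and check the first log-concavity inequality at $k=1$: one needs $J_{4,1}^2 \geq J_{4,0}\,J_{4,2}$. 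Actually $J_4(t)$ has only three nonzero coefficients $J_{4,0},J_{4,1},J_{4,2}$ summing to $(2\cdot2-1)!! = 3$, so $n=2$ is too degenerate; I would instead take $n=3$ (matchings in $S_6$), where $J_6(1) = 5!! = 15$ and $J_6(t)$ has degree $5$, giving enough coefficients for the inequality to have content.

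The key steps are: (i) compute $J_{2n,k}$ for the chosen small $n$ and all $k$ from the displayed formula, or equivalently expand $J_{2n}(t) = (1-t)^{2n+1}\sum_k\binom{\binom{k+1}{2}+n-1}{n}t^k$ as a polynomial; (ii) identify an index $k$ at which $J_{2n,k}^2 < J_{2n,k-1}\,J_{2n,k+1}$; (iii) report that triple of coefficients as the witness. Since log-concavity of a polynomial with nonnegative coefficients is a necessary condition for real-rootedness (which in turn, via Theorem \ref{Ben}, would have given asymptotic normality), exhibiting this single failure simultaneously proves the proposition and shows Theorem \ref{Ben} cannot be invoked here. I would present the computation compactly, e.g. listing the coefficient vector of $J_6(t)$ and pointing to the offending adjacent triple.

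The main obstacle is simply choosing $n$ small enough that the arithmetic is clean but large enough that log-concavity genuinely fails — it is conceivable that $J_6(t)$ is still log-concave (or even real-rooted) and one must go to $n=4$ or beyond, since the generating-function data only tells us $f_n(k) = \binom{\binom{k+1}{2}+n-1}{n}$ is a degree-$2n$ polynomial in $k$ whose ratio $f_{n+1}(k)/f_n(k)$ is \emph{not} a polynomial, so none of the real-rootedness machinery of \eqref{lwcond} applies, but that does not by itself force a failure at the first available $n$. A secondary, more conceptual route would be to analyze the asymptotics of $J_{2n,k}$ for $k$ near the upper end of its support $k\approx 2n-1$: there the binomial $\binom{\binom{k+1}{2}+n-1}{n}$ grows super-exponentially in $k$ before the alternating sum truncates, and one can try to show the resulting boundary coefficients violate log-concavity for all large $n$; but for the purpose of this proposition a single explicit counterexample is cleaner and I would pursue that first, resorting to the asymptotic argument only if small cases turn out to be log-concave.
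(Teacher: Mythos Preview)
Your primary route---computing $J_{2n}(t)$ for a small $n$ and exhibiting a violating triple---does not succeed at the values you propose, and even if it eventually did it would not prove the proposition as stated. Concretely, one finds (using the correct inversion $J_{2n,k}=\sum_{i}(-1)^i\binom{2n+1}{i}a_{n,k-i}$, not $\binom{2n+3}{i}$) that
\[
J_4(t)=t+t^2+t^3,\qquad J_6(t)=t+3t^2+7t^3+3t^4+t^5,\qquad J_8(t)=t+6t^2+27t^3+37t^4+27t^5+6t^6+t^7,
\]
and each of these coefficient sequences \emph{is} log-concave. So the ``first available $n$'' hope fails, and you would have to search considerably higher; more to the point, the statement is meant uniformly in $n$ (this is what is needed to rule out Theorem~\ref{Ben} for the whole family), so a single witness $n$ would not close the argument anyway. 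Your secondary asymptotic idea is in the right spirit but is left vague.

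The paper's proof avoids any direct computation of $J_{2n,k}$ by exploiting the convolution structure. Inverting the defining identity gives
\[
a_{n,k}:=\binom{\binom{k+1}{2}+n-1}{n}=\sum_{i=0}^{k}\binom{2n+i}{i}\,J_{2n,k-i},
\]
i.e.\ $\sum_k a_{n,k}x^k$ is the product of $\sum_i\binom{2n+i}{i}x^i$ and $J_{2n}(x)$. The first factor has a log-concave coefficient sequence. By Stanley's result that the product of two log-concave polynomials is log-concave, it suffices to show that $\{a_{n,k}\}_k$ is \emph{not} log-concave; the paper checks this at $k=2$ via $a_{n,2}^2=\binom{n+2}{2}^2$ versus $a_{n,1}a_{n,3}=\binom{n+5}{5}$, the latter being a degree-five polynomial in $n$ and hence eventually dominating. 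This structural transfer is the missing idea in your plan: it converts the hard question about $J_{2n,k}$ into an elementary inequality about the explicit numbers $a_{n,k}$, and it works for all (sufficiently large) $n$ at once rather than one $n$ at a time.
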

\pf
By \eqref{inv} we have
\begin{equation*}
J_{2n}(t)= \sum_{k=0}^{2n-1} J_{2n,k} t^k = \sum_{k=0}^{2n-1}\left( \sum_{i=0}^k (-1)^{i} \binom{2n+1}{i} \binom{\binom{k-i+1}{2}+n-1}{n} \right)t^k .
\end{equation*}

By inverting the coefficients (see Chapter 3.3 of \cite{R}), 
\begin{equation*}
a_{n,k}:=\binom{\binom{k+1}{2}+n-1}{n}=\sum_{i=0}^k \binom{2n+i}{i} J_{2n,k-i}.
\end{equation*}
Then, we have
\begin{equation*}
\sum_{k}a_{n,k} x^k =\left(\sum_{i} \binom{2n+i}{i} x^i \right) \left(\sum_{j} J_{2n,j} x^j \right).
\end{equation*}
 The first three terms of $\{a_{n,k}\}$ reveal below that it is not a log-concave sequence since
\begin{equation*}
a_{n,2}^2=\binom{n+2}{2}^2 < \binom{n+6}{2}=a_{n,1}a_{n,3}.
\end{equation*}

On the other hand, the coefficients of the first sum on the right hand side is a log-concave sequence. But Proposition 1 in \cite{Su} says that the product of two log-concave polynomials is also log-concave, which implies $\{J_{2n,k}\}$ is not log concave. 
\bbox \\

Since real-rootedness implies log-concavity, the roots of $J_{2n}(t)$ are not real-only. So Theorem \ref{Ben} is not applicable in this case. Nevertheless, the asymptotic normality is shown by Kim using generating function to determine the asymptotic behaviour of the moments. We give a martingale proof below.
The recurrence relation is derived in \cite{GZ}, it can also be obtained from \eqref{recrec}.
\begin{align*}
J_{2n+2,k}=& \frac{k(k+1)+2n}{2n+2}J_{2n,k}+\frac{2(k-1)(2n-k+1)+2}{2n+2}J_{2n,k-1}\\
&+\frac{(2n-k+2)(2n-k+3)+2n}{2n+2}J_{2n,k-2}.
\end{align*}
Let $M_{2n}$ count the number of descents in random matchings. We can verify \eqref{diag} and define the submartingale 
\begin{equation*} 
M_{2n+2}=\begin{aligned}
   &   
   \begin{cases}
     M_{2n},&  \text{with prob. } \frac{M_{2n}(M_{2n}+1)+2n}{(2n+1)(2n+2)},\\[5pt]
       M_{2n}+1,& \text{with prob. } \frac{2M_{2n}(2n-M_{2n})+2}{(2n+1)(2n+2)},\\[5pt]
       M_{2n}+2,&  \text{with prob. } \frac{(2n-M_{2n})(2n-M_{2n}+1)+2n}{(2n+1)(2n+2)}.
    \end{cases}
\end{aligned}
\end{equation*}
We have $\mathbf{E}(M_{2n+2}|\mathcal{F}_{n})=\frac{n}{n+1}M_{2n} + \frac{2n+1}{n+1}.$ Note that $E(M_{2n})=n$ (see \cite{K} for its derivation). Then by properly scaling, we have the zero-mean martingale $Z_n= n \left(M_{2n} - n \right).$ We then have 
\begin{equation*}
X_{i}|\mathcal{F}_{i \text{-}1}=\begin{aligned}
   &   
   \begin{cases}
     W_{i\text{-}1} - i,&  \text{with prob. } \frac{(W_{i\text{-}1} +i-1)(W_{i\text{-}1} +i)+2i-2}{2i(2i-1)},\\[5pt]
       W_{i\text{-}1},& \text{with prob. } \frac{2(W_{i\text{-}1} +i-1) (i-1-W_{i\text{-}1})+2}{2i(2i-1)},\\[5pt]
       W_{i\text{-}1} + i,&  \text{with prob. } \frac{(i-1-W_{i\text{-}1})(i-W_{i\text{-}1})+2i-2}{2i(2i-1)},
    \end{cases}
\end{aligned}
\end{equation*}
where $W_i=M_{2i}-i.$ Next we calculate the higher confitional moments to apply Theorem \ref{convthm}. 
\begin{align*}
\mathbf{E}[X_{i}^2 | \mathcal{F}_{i\text{-}1}] &=  \frac{i-1}{2i-1}\left( i^2+2i-W_{i-1}^2 \right),\\
\mathbf{E}[X_{i}^3 | \mathcal{F}_{i\text{-}1}]&= \frac{2-i}{2i-1} W_{i-1}^3 + \frac{i(i^2-2i-6)}{2i-1} W_{i-1}, \\
\mathbf{E}[X_{i}^4 | \mathcal{F}_{i\text{-}1}] &= \frac{3}{2i-1} W_{i-1}^4 -\frac{i(i^2+2i+12)}{2i-1} W_{i-1}^2+\frac{i^3(i+1)(i-2)}{2i-1}.
\end{align*}

The moments of $M_{2i}$ up to the fifth degree are calculated in \cite{Ki}. We can compute the central moments using them to show that the conditions of Theorem \ref{convthm} is satisfied. 

Matchings define a particular conjugacy class in $S_{2n},$ namely the one with $n$ 2-cycles. In general, let $C$ be a conjugacy class in $S_n$ with $n_i$ $i-$cycles and $A_C(t)= \sum_{\pi \in C} t^{\textnormal{des}(\pi)}.$ It can be found in \cite{Fu} that
\begin{equation*}
\frac{A_C(t)}{(1-t)^{n+1}}=\sum_{k\geq 0}t^k \prod_{i=1}^n \binom{f_{ik}+n_i-1}{n_i}.
\end{equation*}
where $f_{ik}=\frac{1}{i}\sum_{d|i} \mu(d) k^{i/d}$ and $\mu(\cdot)$ is the M\"{o}bius function. We are interested in finding recursive formulae for the number of permutations in a fixed conjugacy class with a given number of descents. These numbers arise in riffle shuffles \cite{DMP}. A recursive formula can be derived from \eqref{recrec} for certain cases.  For example, it is possible to find such recurrence relation in the case of $n$ $3-$cycles in $S_{3n},$ but it is not possible for involutions with no restriction on the number of fixed-points. But if the number of fixed-points is fixed at a certain proportion, then a martingale formulation is possible. 
\subsection{Two-sided Eulerian numbers}
The last example is a vector descent statistic which can be studied by multivariate martingale limit theorems. Two-sided Eulerian numbers are introduced in \cite{Car} as the coefficients of   
\begin{equation*}
A_n(t,s)=\sum_{\pi \in S_n} t^{\textnormal{des}(\pi)+1} s^{des(\pi^{-1})+1} .  
\end{equation*}
The generating function is obtained by counting $n$ unlabelled balls in $kl$ distinct boxes, which is a two-dimensional analogue of barred permutations. See \cite{Pe} for the counting argument and a survey on these numbers. It is given by
\begin{equation*}
\frac{A_{n}(t,s)}{(1-t)^{n+1}(1-s)^{n+1}}=\sum_{k,l \geq 0} \binom{kl+n-1}{n} t^k s^l.
\end{equation*}
The recurrence relation on coefficients are derived in \cite{Car}; it satisfies the two-dimensional generalization of \eqref{diag}. Let $D_n$ be the random variable counting the number of descents of a uniformly chosen permutation $\pi,$ while $D'_n$ counts the number of descents in $\pi^{-1}.$ Then,
\begin{equation*} 
(D_{n+1}, D'_{n+1})=\begin{aligned}
   &   
   \begin{cases}
     (D_n,D'_n) ,&  \text{with prob. } \frac{(D_n+1)(D'_n+1)+n}{(n+1)^2},\\
       (D_n+1,D'_n),& \text{with prob. } \frac{(n-D_n)(D'_n+1)-n}{(n+1)^2},\\
       (D_n,D'_n+1),&  \text{with prob. } \frac{(D_n+1)(n-D'_n)-n}{(n+1)^2} \\
       (D_n+1, D'_n+1), & \text{with prob. } \frac{(n-D_n)(n-D'_n)+n}{(n+1)^2}
    \end{cases}
\end{aligned}
\end{equation*}
is a submartingale, and 
\begin{equation*}
(Z_n,Z'_n)=n\left( D_n-\frac{n-1}{2}, D'_n-\frac{n-1}{2} \right)
\end{equation*}
is  a zero-mean martingale. A central limit theorem is recently shown in \cite{CD}. We use below a multivariate limit theorem for martingales, whose proof is  in \cite{Aa}. The theorem is in functional form, but we can embed $(Z_n, Z'_n)$ in Brownian motion (see Appendix of \cite{HH}), apply the theorem, and the unit time for standard Brownian motion gives standard normal distribution. Later, equivalent conditions for the theorem are given in \cite{He}. They are similar to one-dimensional case. First, we need to verify Lindeberg-condition and the convergence of conditional variance for both coordinates. Since $D_n$ and $D'_n$ are identically distributed, and the case for $D_n$ is already covered in Section \ref{clt}, we only need to show the additional condition on covariances,
\begin{equation}\label{cova}
\sum_{i}\mathbf{E}[X_{n,i}X'_{n,i}|\mathcal{F}_{n,i-1}] \overset{p}{\to} 0,
\end{equation}
where $X_{n,i}$ and $X'_{n,i}$ are defined as in Section \ref{clt} to be martingale differences. We also define the central random variables, $W_i=D_i-\frac{i-1}{2}$ and $W'_i=D'_i - \frac{i-1}{2}.$ It can be calculated from \eqref{wi} that
\begin{equation*}
\mathbf{E}[X_{n,i}X'_{n,i}|\mathcal{F}_{n,i-1}]=\frac{36}{n^2(n+1)} W_{i-1}W'_{i-1}.
\end{equation*}
In order to use Chebyschev's inequality to show \eqref{cova}, we first prove the following lemma.

\begin{lemma}\label{covlem} For a uniformly chosen permutation $\pi \in S_n$, define the random variables, $D_n(\pi)=\textnormal{des}(\pi)$ and $D'_n(\pi)= des(\pi^{-1})$. Then, $\mathbf{E}\left[\left(D_n-\frac{n-1}{2}\right)^2 \left(D'_n-\frac{n-1}{2}\right)^2\right]$ is of order $n^2.$
\end{lemma}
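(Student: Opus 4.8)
The plan is to bound the mixed fourth moment $\mathbf{E}[W_n^2 (W'_n)^2]$, where $W_n = D_n - \tfrac{n-1}{2}$ and $W'_n = D'_n - \tfrac{n-1}{2}$, by writing $D_n$ and $D'_n$ as sums of indicator variables and expanding. Concretely, $D_n = \sum_{i=1}^{n-1} T_i$ with $T_i = I(\pi(i) > \pi(i+1))$, and $D'_n = \sum_{j=1}^{n-1} T'_j$ with $T'_j = I(\pi^{-1}(j) > \pi^{-1}(j+1))$, equivalently the indicator that $j+1$ appears before $j$ in the word $\pi$. Since $\mathbf{E}(D_n) = \mathbf{E}(D'_n) = \tfrac{n-1}{2}$, I would first re-center by setting $\widetilde T_i = T_i - \tfrac12$ and $\widetilde T'_j = T'_j - \tfrac12$, so that $W_n = \sum_i \widetilde T_i$, $W'_n = \sum_j \widetilde T'_j$, each summand is bounded by $\tfrac12$ in absolute value and has mean zero. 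Then
\begin{equation*}
\mathbf{E}[W_n^2 (W'_n)^2] = \sum_{i_1,i_2,j_1,j_2} \mathbf{E}\big[\widetilde T_{i_1} \widetilde T_{i_2} \widetilde T'_{j_1} \widetilde T'_{j_2}\big],
\end{equation*}
a sum of $O(n^4)$ terms each bounded by a constant, so the trivial bound is $O(n^4)$; the point is to show enough cancellation to bring this down to $O(n^2)$.

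The mechanism for cancellation is a locality/independence argument in two directions at once. The family $\{\widetilde T_i\}$ is $2$-dependent in the index $i$ (as already used in Section \ref{Mom}): if the index sets $\{i_1,i_2\}$ and $\{j_1,\dots\}$ relevant to one block are separated by gap $>1$ from the others, one gets a product of expectations, and the mean-zero property kills any term in which some index is isolated from all the rest. The same is true for $\{\widetilde T'_j\}$ in the index $j$: here one uses the classical fact that whether $j+1$ precedes $j$ depends, after conditioning, only on the relative order of consecutive values, and that $\widetilde T'_{j_1}, \widetilde T'_{j_2}$ become independent (and independent of a given $\widetilde T_i$-block) when $|j_1 - j_2|$ and the "value-distance" to the other indices exceed $1$. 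The subtle point is that $\widetilde T_i$ is about \emph{positions} and $\widetilde T'_j$ is about \emph{values}, so the two locality structures are on different axes; I would argue that a nonzero contribution requires the multiset $\{i_1,i_2\}$ to be "clustered" in the position-axis (at most $2$ free choices once one index is fixed, or else an isolated position-index forces mean zero) \emph{and} simultaneously $\{j_1,j_2\}$ to be clustered in the value-axis. Each clustering constraint removes roughly one free index, i.e. a factor of $n$, so $n^4$ free choices drop to $O(n^2)$. A clean way to organize this: partition the quadruples $(i_1,i_2,j_1,j_2)$ according to which of the four indices are "tied" (within distance $1$ on their axis) to which, show that any partition with a singleton block contributes $0$, and count that the surviving partitions have at most $2$ degrees of freedom.

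For the actual bookkeeping I would not evaluate the joint probabilities exactly; it suffices to exhibit, for each admissible "clustering pattern," a uniform bound $|\mathbf{E}[\widetilde T_{i_1}\widetilde T_{i_2}\widetilde T'_{j_1}\widetilde T'_{j_2}]| \le C$ together with the count $O(n^2)$ of index-tuples realizing that pattern. The finitely many patterns with a free (isolated) index contribute exactly $0$ by taking the expectation over the isolated coordinate first, using that, conditionally on everything outside a window of size $O(1)$ around it, the relevant indicator is an unbiased coin — this is where $\mathbf{E}(\widetilde T_i \mid \text{rest}) = 0$ (resp.\ for $\widetilde T'_j$) must be checked carefully, since the conditioning is what makes the local events genuinely independent of the rest. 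I expect \textbf{this conditional-independence/unbiasedness check for the inverse statistic $\widetilde T'_j$ to be the main obstacle}: one must verify that, given the pattern of relative orders of all values \emph{except} in a unit window around $j$, the event "$j+1$ before $j$" still has probability exactly $\tfrac12$ and is independent of the position-indicators $\widetilde T_{i}$ at positions far from where $j,j+1$ land — a statement that is intuitively clear from exchangeability but needs a precise argument because positions and values interact. Once that is in place, the count $O(n^2)$ of surviving tuples is routine, matching the order predicted by asymptotic normality (cf.\ \eqref{fourth}), and the lemma follows.
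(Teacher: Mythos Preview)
Your strategy of centering and expanding is sound, and you have correctly located the difficulty: the interaction between position-indicators $\widetilde T_i$ and value-indicators $\widetilde T'_j$. However, the specific claim you build the argument on --- that a quadruple with an index ``isolated on its own axis'' contributes exactly $0$ --- is false. For \emph{every} pair $(i,j)$ one has
\[
\mathbf{E}\bigl[\widetilde T_i\,\widetilde T'_j\bigr]=\operatorname{Cov}(T_i,S_j)=\frac{1}{2n(n-1)}\neq 0,
\]
so $\widetilde T_i$ is never exactly uncorrelated with the value side, no matter how far $i$ is from the other position index. The conditional-unbiasedness you hope for (``$\mathbf{E}(\widetilde T_i\mid\text{rest})=0$'') fails precisely on the event that the values sitting at positions $i,i{+}1$ collide with $\{j,j{+}1\}$; this event is random in $\pi$ and cannot be read off from the deterministic index pattern you partition by. Consequently your clustering partition (ties ``within distance $1$ on their axis'') has no blocks connecting an $i$-index to a $j$-index, and that is exactly where the dependence lives.

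The paper's proof supplies the missing device: it conditions on $K=\bigl|\{\pi(i),\pi(i{+}1)\}\cap\{j,j{+}1\}\bigr|$. For $K\le 1$ one gets $\mathbf{P}(T_i=S_j=1\mid K)=\tfrac14$ (independence for $K=0$, a symmetry argument for $K=1$), while $K=2$ occurs with probability $2/\bigl(n(n-1)\bigr)$ and is the sole source of correlation. This replaces your ``exactly zero'' by ``$O(n^{-2})$ per cross-pair'', after which the counting you sketch does go through. The paper organizes this via the uncentered mixed moments $\mathbf{E}(D_nD'_n)$, $\mathbf{E}(D_n^2D'_n)$, $\mathbf{E}(D_n^2D_n'^2)$, showing each differs from the product of marginals by the expected power of $n$, and then assembles the centered moment. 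Your centered expansion can be repaired the same way, but only once the $K$-conditioning (or an equivalent coupling between positions and values) is made explicit; ``tied within distance $1$ on their axis'' is not it. As an aside, the bare \emph{upper} bound $\mathbf{E}[W_n^2W_n'^2]=O(n^2)$, which is all the subsequent application needs, follows in one line from Cauchy--Schwarz and \eqref{fourth}, since $D_n$ and $D'_n$ are identically distributed.
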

\pf
Define $D_n=\sum_{i=1}^{n-1} T_i$ and $D'_n=\sum_{i=1}^{n-1} S_i$ as in Section \ref{Mom}, where $T_i$ is as before and  
\begin{equation*}
   S_i(\pi) = 
    \begin{cases}
      1  \text{ if } \pi^{-1}(i) > \pi^{-1}(i+1),\\
      0 \text{ otherwise.} 
    \end{cases}
\end{equation*}
We first evaluate
\begin{equation*}
\mathbf{E}(D_n D'_n)= \sum_{i=1}^{n-1} \sum_{j=1}^{n-1} \mathbf{E}(T_i S_j).
\end{equation*}
As suggested in \cite{CD}, we define $K=\{\pi(i), \pi(i+1) \} \cap \{j,j+1 \},$ and break the expectation into three terms with respect to the size of $K$,
\begin{align*}
\mathbf{E}(T_i S_j)&= \mathbf{P}(T_iS_j=1 |K=0) \cdot \mathbf{P}(K=0)\\ &+ \mathbf{P}(T_iS_j=1 |K=1) \cdot \mathbf{P}(K=1) \\ &+\mathbf{P}(T_iS_j=1 |K=2) \cdot \mathbf{P}(K=2).
\end{align*}
It is straightforward that $\mathbf{P}(T_i=S_j=1 |K=0)=\frac{1}{4}$ by independence. The same holds true for $K=1$ by symmetry. Suppose $\pi(i)=j,$ which has  $\frac{1}{4}$ probability conditioned on $K=1.$ Then $T_iS_j=1$ if and only if $\pi(i+1) < j$ and $\pi^{-1}(j+1)<i.$ While if $\pi(i+1)=j$, $T_iS_j=1$ if and only if $\pi(i) > j$ and $\pi^{-1}(j+1)<i.$ Considering the other two cases as well, we observe that the probabilities of $\{T_iS_j=1\}$ add up to 1 through four equiprobable cases. Therefore, $\mathbf{P}(T_i=S_j=1 |K=1)=\frac{1}{4}.$ Finally, we deal with the last term. There are four possible pairs, and two of them give descents in both positions, so  $\mathbf{P}(T_i=S_j=1 |K=2)=\frac{1}{2}.$ A simple counting argument shows that $\mathbf{P}(K=2)=\frac{1}{\binom{n}{2}}=\frac{2}{n(n-1)}.$ Hence,
\begin{equation*}
\mathbf{E}(T_i S_j)= \frac{1}{4}+\frac{1}{2n(n-1)},
\end{equation*}
which eventually shows that
\begin{equation*}
\mathbf{E}(D_n D'_n)-\mathbf{E}(D_n) \mathbf{E}(D'_n) =\frac{n-1}{2n},
\end{equation*}
which is of constant order. We argue for $\mathbf{E}(D^2_n D'_n)$ and $\mathbf{E}(D^2_n D'^2_n)$ along the same lines. Consider
\begin{equation*}
\mathbf{E}(D_n^2 D'_n)=\sum_{i=1}^{n-1}\sum_{j=1}^{n-1} \sum_{k=1}^{n-1} \mathbf{E}(T_i T_j S_k),  
\end{equation*}
(We refer to Section \ref{Mom} for treatment of cases where $j=i$ and $j=i+1$) Again, depending on the size of $\{\pi(i), \pi(i+1),\pi(j),\pi(j+1)\}  \cap \{ k, k+1\},$ we can perform case-by-case analysis. Given that the size of the intersection is 0 or 1, it can be shown that $T_i T_j$ is independent of $S_k.$ Since, the probability that the size of the intersection is 2, is of order $\frac{1}{n^2}$ and there are $(n-1)^3$ terms, we have 
\begin{equation*}
\mathbf{E}(D^2_n D'_n)-\mathbf{E}(D^2_n) \mathbf{E}(D'_n) =\mathcal{O}(n).
\end{equation*}
A similar argument gives
\begin{equation*}
\mathbf{E}(D^2_n D'^2_n)-\mathbf{E}(D^2_n) \mathbf{E}(D'^2_n) =\mathcal{O}(n^2).
\end{equation*}
Therefore, we can check term by term that
\begin{equation*}
\mathbf{E}(W_n^2 W'^2_n) - \mathbf{E}(W_n^2)\mathbf{E}(W'^2_n) = \mathcal{O}(n^2).
\end{equation*}
\bbox

We have
\begin{align*}
\mathbf{E}\left(\sum_{i}\mathbf{E}[X_{n,i}X'_{n,i}|\mathcal{F}_{n,i-1}] \right)^2 & 
\leq \frac{C}{n^6} \sum_{1 \leq i,j\leq n\text{-}1}  \mathbf{E}(W_{i-1}W'_{i-1}W_{j-1}W'_{j-1}) \\
 & \leq \frac{C}{n^6}\sum_{1 \leq i,j\leq n\text{-}1} \sqrt{\mathbf{E} ( W_{i \text{-} 1}^2 W'^2_{i \text{-} 1}) \mathbf{E}( W_{j \text{-} 1}^2 W'^2_{j \text{-} 1})} \\
  &\leq  \frac{C}{n^6} \sum_{1 \leq i,j\leq n\text{-}1}  i j \\
& \leq \frac{C }{n^2} \rightarrow 0,
\end{align*}
where the third inequality follows from Lemma \ref{covlem}. Therefore, we verify \eqref{cova} by Chebyschev's inequality. Then the conditions of Theorem 3.3 in \cite{He} are satisfied. We conclude that 
\begin{equation*}
\left( \frac{D_n - \frac{n-1}{2}}{\sqrt{\frac{n+1}{12}}},\frac{D'_n - \frac{n-1}{2}}{\sqrt{\frac{n+1}{12}}}\right)
\end{equation*}
 is asymptotically bivariate normal with zero mean and unit covariance matrix.

\section{Proof of Theorem \ref{convthm}} \label{Pf}
In the rest of the paper, we repeatedly use $C, c, C'$ and $c'$ for different constants that are all independent of $n$, if no confusion arises.

\subsection{Preliminaries} The following two lemmas play key roles in the Bolthausen's proof. 
\begin{lemma} \textnormal{(\cite{Bh})}\label{boldlem2}
Let $X$ and $\xi$ be random variables and $\Phi$ be the standard normal distribution function. Define
\begin{equation*}
\delta=\sup_t |\mathbf{P}(X \leq t) - \Phi(t)|
\end{equation*}
\begin{equation*}
\textnormal{ and } \delta^{*}=\sup_t |\mathbf{P}(X + \xi\leq t) - \Phi(t)|.
\end{equation*}
\end{lemma}
Then,
\begin{equation*}
\delta \leq 2 \delta^{*} + \frac{5}{\sqrt{2 \pi}} \| \mathbf{E}(\xi^2 | X)  \|^{1/2}_{\infty}.
\end{equation*}

\begin{lemma} \textnormal{(\cite{Bh})} \label{boldlem3}
Let $X$ be a random variable, $f \in \mathcal{L}_1(\mathbb{R})$ be of bounded variation $\| f\|_V$ and $\delta$ be as defined in Lemma \ref{boldlem2}. For $a\neq 0,$
\begin{equation*}
|\mathbf{E}[f(aX+b)]| \leq \|f\|_V \, \delta + \frac{\|f\|_1}{\sqrt{2\pi}}\, |a|^{-1}.
\end{equation*}
\end{lemma}

We next discuss some technically useful implications of the additional assumption \eqref{maincond} to the Berry-Esseen theorem in \cite{Bh}. First, \eqref{maincond} implies that for $\epsilon> 0$ there exists $N \in \mathbb{N}$ such that $m \geq N$ implies 
\begin{equation}\label{boundondelta}
\frac{1-\epsilon}{m} \leq \frac{\sigma_{m}^2}{s_m^2} \leq \frac{C+\epsilon}{m}
\end{equation}
for some constant $C$. Let us fix $\alpha \in (0,1)$ and choose $n$ large enough that $ \alpha n > N.$ Then since $s_n^2=s_{n-1}^2+\sigma_n^2,$ we have $\frac{s_{n-1}^2}{s_n} \leq 1-\frac{1-\epsilon}{n}$ by \eqref{boundondelta}. Repeating the same argument, we have
\begin{equation}\label{varratio}
\frac{s_m^2}{s_n^2} \leq \prod_{k=m+1}^n \left(1-\frac{1-\epsilon}{k}\right).
\end{equation}
Since 
\begin{equation*}
\log \left(1-\frac{1-\epsilon}{k}\right)= -\frac{1-\epsilon}{k} + \mathcal{O}(k^{-2}),
\end{equation*}
comparing the right-hand side of \eqref{varratio} with $\prod_{k=m+1}^n \left(1-\frac{1}{k}\right) = \frac{m}{n},$ we have
\begin{equation}\label{varratiorange}
\frac{s_m^2}{s_n^2} \leq \prod_{k=m+1}^n \left(1-\frac{1-\epsilon}{k}\right) \leq \left(\frac{m}{n}\right)^{1-\epsilon}+ \mathcal{O}(n^{-1})
\end{equation}
for $m \geq \alpha n.$ In addition, reconsidering \eqref{varratio},
 \begin{equation}\label{sdratio}
\sigma_m^2 \leq (C+\epsilon) \frac{s_m^2}{m} \leq {\alpha}^{-\epsilon}(C+\epsilon) \frac{s_n^2}{n}
\end{equation} 
provided that $m \geq \alpha n.$ If we take $m \leq \alpha n,$ it follows from \eqref{varratiorange} that 
\begin{equation}\label{1tom}
s_n^2-s_m^2 \geq s_n^2 - s_{\alpha n}^2 \geq (1-\alpha^{1-\epsilon})s_n^2.
\end{equation}
\vspace*{1mm}

The proof will also use the following lemma in relation to \eqref{maincond}.
\begin{lemma}\label{lemma3}
Let $\gamma >0$ and $s\neq 1.$ Then
\begin{equation*}
\sum_{k=1}^n \left(\frac{1}{k+ \gamma}\right)^s \leq C_s \max\{1, n ^{1-s}\}
\end{equation*}
for some constant $C_s$ independent of $n$.
\end{lemma}
\pf Suppose $s >1.$ Then 
\begin{equation*}
\sum_{k=1}^n \left(\frac{1}{k+ \gamma}\right)^s \leq \sum_{k=1}^n \frac{1}{k^s}  \leq \zeta(s) 
\end{equation*}
where $\zeta$ is Riemann zeta function. If $s < 1$, we can bound it by the upper Riemann sums of $\int_{1}^n x^{-s} dx,$ which is $\frac{n^{1-s}}{1-s}$ plus smaller order terms.
\bbox

\subsection{Lindeberg's argument} Let $X_1, \dots, X_n$ be martingale differences with $\sigma_i^2=\mathbf{E}[X_i^2]$ and $s_k^2=\sum_{i=1}^k \sigma_i^2.$ Define $Z_1, \dots, Z_n, \xi$ to be central normal variables with $\textnormal{Var}(Z_i) = \sigma_i^2$ and $\textnormal{Var}(\xi)=\kappa^2 >0,$ which are independen, both from $X_1,\dots, X_n$ and also among themselves. Later in the proof, $\kappa^2$ is chosen to be a function of $s_n^2.$ Define
\begin{align*}
\delta(i)&=\sup_t \left| \mathbf{P}\left(\frac{X_1+\cdots+X_i}{s_i} \leq t\right) - \Phi(t) \right| \\
\delta_{\xi}(i)&=\sup_t \left| \mathbf{P}\left(\frac{X_1+\cdots+X_i+\xi}{s_i} \leq t\right) - \Phi(t) \right|.
\end{align*}
Lemma \ref{boldlem2} and the simple estimate below
\begin{equation*}
\sup_t |\Phi(t) - \Phi(\alpha t)| \leq \frac{e^{-1/2}}{\sqrt{2\pi}} \, |\alpha -1|
\end{equation*}
imply that
\begin{equation} \label{bounddelta}
\delta(n) \leq 4 \delta_{\xi}(n) + C\frac{\kappa}{s_n}.
\end{equation}
Our goal is to bound
\begin{align*}
\delta_{\xi}(n)=& \sup_t \left|\mathbf{P}\left(\frac{X_1+\cdots + X_n+ \xi}{s_n} \leq t\right) - \mathbf{P}\left(\frac{Z_1+\cdots + Z_n+ \xi}{s_n} \leq t\right)\right|. 
\end{align*}
Lindeberg's idea in his proof of the central limit theorem \cite{L} is to write the difference of probabilities above as
\begin{align*}
& \sum_{i=1}^n\mathbf{P}\left(\frac{X_1+\cdots + X_{i-1}+X_i+Z_{i+1}+\cdots+Z_n+ \xi}{s_n} \leq t\right)
 \\
  - &\sum_{i=1}^n\mathbf{P}\left(\frac{X_1+\cdots + X_{i-1}+Z_i+Z_{i+1}+\cdots+Z_n+ \xi}{s_n} \leq t\right).
\end{align*}

Define $U_m = \sum_{i=1}^{m-1}X_i /s_n.$ Observe that $\sum_{i=m}^{n}Z_i+\xi /s_n$ is a central normal random variable with variance $\lambda_m^2 :=\frac{s_n^2-s_m^2 + \kappa^2}{s_n^2}$ and independent of $U_m, X_m$ and $Z_m.$ Write the above expression as
\begin{equation*}
\sum_{m=1}^n \Phi\left( \frac{t-U_m}{\lambda_m}-\frac{X_m}{\lambda_m s_n}\right) - \Phi\left( \frac{t-U_m}{\lambda_m}-\frac{Z_m}{\lambda_m s_n}\right),
\end{equation*}
then to apply martingale properties and the fact that $U_m$ is $\mathcal{F}_{m-1}$ measurable, we further write it as
\begin{equation} \label{lindo}
 \sum_{m=1}^n \mathbf{E}\left(\mathbf{E} \left[ \Phi \left( \frac{t-U_m}{\lambda_m} - \frac{X_m}{\lambda_m s_n}\right) - \Phi \left( \frac{t-U_m}{\lambda_m} - \frac{Z_m}{\lambda_m s_n}\right) \bigg| \mathcal{F}_{m-1}\right] \right).
\end{equation}
The idea is to use the Taylor expansion of the normal distribution function to obtain bounds on the sum above.

\subsection{Error term in the Taylor expansion} The estimations below are for a fixed term of the sum, so we drop the index for $m$ and define 
$$U=\frac{t-U_m}{\lambda_m}, \, V_x=\frac{X_m}{\lambda_m s_n} \textnormal{ and } V_z=\frac{Z_m}{\lambda_m s_n}.$$ 
Consider the Taylor expansion of the third order,
\begin{align*}
\Phi(u-v) &= \Phi(u) - v \varphi(u)+ \frac{v^2}{2} \varphi'(u) - \frac{v^3}{6} \varphi''(u) + R_4(u,v),
\end{align*}
where $R_4(u,v)$ is the remainder term. So the expression in the expectation of the $m$th term of \eqref{lindo} is
\begin{align*}
&\mathbf{E} \left[ \Phi \left( U-V_x\right) - \Phi \left( U-V_z\right) | \mathcal{F}_{m-1}\right] \\ 
&=\varphi(U)\mathbf{E}[(V_z-V_x) | \mathcal{F}_{m-1}] + \varphi'(U) \mathbf{E} \left[\left( \frac{V_x^2-V_z^2}{2} \right)  \bigg| \mathcal{F}_{m-1}\right] \\
&- \varphi''(U) \mathbf{E} \left[\left( \frac{V_x^3-V_z^3}{6} \right) \bigg| \mathcal{F}_{m-1}\right]+ \mathbf{E}[R_4(U,V_x)| \mathcal{F}_{m-1}] +  \mathbf{E}[R_4(U,V_z)| \mathcal{F}_{m-1}].
\end{align*}
Observe that the first term vanishes since $V_x$ is a martingale difference. For the third term, we note that $\mathbf{E}(Z_m^3)=0.$ Then we write  
\begin{equation}\label{a1toa4}
\mathbf{E} \left[ \Phi \left( U-V_x\right) - \Phi \left( U-V_z\right) | \mathcal{F}_{m-1}\right] = A_1 + A_2 + A_3 + A_4
\end{equation}
where
\begin{align*}
A_2 &= \frac{1}{2 \lambda_m^2 s_n^2} \mathbf{E}((\mathbf{E}[X_m^2|\mathcal{F}_{m-1}]-\sigma_m^2)\varphi'(U)),\\
A_3 & = -\frac{1}{6 \lambda_m^3 s_n^3} \mathbf{E}(\mathbf{E}[X_m^3|\mathcal{F}_{m-1}]\varphi''(U)), \\
A_4 & =  \mathbf{E}[R_4(U,V_x)]+ \mathbf{E}[R_4(U,V_z)].
\end{align*}
We note that the two terms of $A_4$ have identical estimates. We further define
\begin{align*}
\beta_{r,m}^{(p)}&=\|\mathbf{E}[X_m^r|\mathcal{F}_{m-1}] \|_{p} \\
\gamma_{r,m}^{(p)}&=\|\mathbf{E}[X_m^r|\mathcal{F}_{m-1}] - \mathbf{E}[X_m^r]\|_{p}.
\end{align*}
 
The next step is to estimate each term in \eqref{a1toa4} to bound the right hand side of \eqref{bounddelta} by a constant independent of $n$ times $n^{-1/2}.$ We note an implication of H\"older's inequality for the sums above, which we will use for the estimates below. 
\begin{equation} \label{holder}
 |\mathbf{E}((\mathbf{E}[X_m^2|\mathcal{F}_{m-1}]-\sigma_m^2)\varphi'(u))|\leq \|\mathbf{E}[X_m^2|\mathcal{F}_{m-1}]-\sigma_m^2 \|_p \|\varphi'(u)\|_q,
 \end{equation}
 where $\frac{1}{p}+\frac{1}{q}=1.$

 \subsubsection{$A_2$}Since the normal density $\varphi$ and its derivatives are bounded,
\begin{equation*}\label{2short}
 A_2 \leq C \, \lambda_m^{-2}s_n^{-2} \gamma_{2,m}^{(p)} \leq C \frac{\sigma_m^2}{\sqrt{m}(s_n^2-s_m^2+\kappa^2)},
 \end{equation*}
by \eqref{holder} and \eqref{bound2}. Summing the quantity above over $m$ up to a linear order of $n$ yields small enough bounds, of order less than or equal to $n^{-1/2}.$ Define 
\begin{equation}\label{pim}
\pi_m=\frac{\sigma_m^2}{s_n^2-s_m^2+\kappa^2}
\end{equation}
We want to show that
\begin{equation}\label{21}
 \sup_n \sqrt{n} \sum_{m=1}^{\alpha n} \frac{\pi_m}{\sqrt{m}} < \infty.
 \end{equation} 
where $\alpha \in (0,1)$. Let us take $\kappa^2=\gamma \frac{s_n^2}{n}$ for $\gamma>0.$ Then by \eqref{1tom},
\begin{equation*}
 \sqrt{n} \sum_{m=1}^{\alpha n} \frac{\pi_m}{\sqrt{m}} \leq \frac{C\sqrt{n}}{s_n^2} \sum_{m=1}^{\alpha n} \frac{\sigma_m^2}{\sqrt{m}}.
 \end{equation*} 
It follows from \eqref{maincond} that, as in \eqref{boundondelta}, $\sigma_m^2 \leq \frac{(C+\delta)s_m^2}{m}$ except for finitely many $m$ for $\delta > 0.$ Therefore,  
\begin{equation*}
  \frac{C\sqrt{n}}{s_n^2} \sum_{m=1}^{\alpha n} \frac{\sigma_m^2}{\sqrt{m}} \leq  \frac{c\sqrt{n}}{s_n^2} \sum_{m=1}^{\alpha n} \frac{s_m^2}{m\sqrt{m}} + \frac{c'\sqrt{n}}{s_n^2} \leq C'
\end{equation*}
since $s_n^2$ is order larger or equal to $n$ and $\left| \frac{s_m^2}{s_m^2}\right| < 1 $ for $m < n.$ Note that $C'$ depends on $\alpha$ and $\delta$ but not on $n$. Thus, \eqref{21} holds true. However, better estimates are needed for large $m.$ The idea in \cite{Bh} is to use Lemma \ref{boldlem3} on $f=(\varphi')^q$ to obtain the bound
\begin{equation}\label{2long}
A_2 \leq  C \lambda_m^{-2}s_n^{-2} \gamma_{2,m}^{(p)}  \left(\delta(m-1)^{1/q}+ \left(\lambda_m \frac{s_n}{s_{m-1}}\right)^{1/q}\right).
 \end{equation} 
Let us  define $$K_n=\sqrt{n} \, \delta(n) \textnormal{ and } K_{(n)} = \max_{1\leq i \leq n} \sqrt{i} \, \delta(i),$$ which will be crucial at the end of the paper. The first term in \eqref{2long} times $\sqrt{n}$ added up over $m$ for the higher indices is bounded as
 \begin{equation} \label{22}
 \sqrt{n} \sum_{m=\alpha n}^{n} \frac{\sigma_m^2 (\delta(m-1)\sqrt{m-1})^{1/q}}{\sqrt{m}\sqrt[2q]{m-1}(s_n^2-s_m^2+ \kappa^2)} \leq  
 C n^{-\frac{1}{2q}} K_{(n-1)}^{1/q} \sum_{m=\alpha n}^{n} \pi_m.
\end{equation}
For the second term, we have
\begin{equation} \label{23}
  \sqrt{n} \sum_{m=\alpha n}^{n} \frac{1}{\sqrt{m}}\left(\frac{\sigma_m^2}{s_{m-1}^2}\right)^{1/2q}\pi_m^{1-\frac{1}{2q}} \leq  
 C n^{-\frac{1}{2q}}  \sum_{m=\alpha n}^{n} \pi_m^{1-\frac{1}{2q}},
 \end{equation} 
which follows from \eqref{maincond}. Then by the definition of \eqref{pim}, 
\begin{equation*}
  \pi_m =\frac{\sigma_m^2}{\sum_{k=m+1}^n \sigma_{k}^2 + \kappa^2}
  \end{equation*}  
 By  \eqref{varratiorange} and considering that $\sigma_n^2 \leq (C + \epsilon)\frac{s_n^2}{n},$ 
\begin{equation}\label{1overk}
\pi_m^2 \leq \frac{C+\epsilon}{ (n-m) \alpha^{\epsilon}(C+\epsilon) + \gamma} \leq C' \frac{1}{n-m + \gamma}.
\end{equation}
Therefore, we have
\begin{equation*}
\sum_{m=\alpha n}^n \pi_m^s \leq C' \sum_{k=1}^{(1-\alpha) n} \left(\frac{1}{k+\gamma}\right)^s.
\end{equation*}
Observe that if $s=1$, which is the case excluded in Lemma \ref{lemma3}, then the sum is bounded by $C' \log n.$ This verifies \eqref{22} is uniformly bounded. Whereas the case with \eqref{23} is covered by Lemma \ref{lemma3}, which guarantees its uniform boundedness. 

\vspace*{2mm}

\subsubsection{$A_3$} $A_3$ is treated in a similar way to $A_2$. By \eqref{holder} and \eqref{bound3},
\begin{equation*}
A_3 \leq C  \, \lambda_m^{-3}s_n^{-3} \beta_{3,m}^{(p')} \leq C \frac{1}{\sqrt[2p']{m}}\pi_m^{3/2}.
\end{equation*}
We first want to show
\begin{equation}\label{31}
\sup_n \sqrt{n} \sum_{m=1}^{\alpha n} \frac{\pi_m^{3/2}}{\sqrt[2p']{m}}<\infty.
\end{equation}
Again, by \eqref{maincond} and \eqref{1tom}, we have 
\begin{equation*}
 \sqrt{n} \sum_{m=1}^{\alpha n} \frac{\pi_m}{\sqrt{m}} \leq \frac{C\sqrt{n}}{s_n^3} \sum_{m=1}^{\alpha n} \frac{\sigma_m^3}{\sqrt[2p']{m}}\leq \frac{c\sqrt{n}}{s_n^3} \sum_{m=1}^{\alpha n} \frac{s_m^3}{m^{\frac{3}{2}+\frac{1}{2p'}}} + \frac{c'\sqrt{n}}{s_n^3} \leq C',
 \end{equation*} 
which verifies \eqref{31}. For larger values of $m$, we have the estimate
\begin{equation*}
 A_3 \leq C \, \lambda_m^{-3}s_n^{-3} \beta_{3,m}^{(p')}  \left(\delta(m-1)^{1/q'}+ \left(\lambda_m \frac{s_n}{s_{m-1}}\right)^{1/q'}\right)
 \end{equation*}
where $\frac{1}{p'}+\frac{1}{q'}=1.$ Multiplying by $\sqrt{n}$ and adding the terms up, we have
\begin{align}
 &C\sqrt{n} \sum_{m=\alpha n}^{n} \frac{1}{\sqrt[2p']{m}}\pi_m^{3/2}\frac{(\delta(m-1)\sqrt{m-1})^{\frac{1}{q'}}}{\sqrt[2q']{m}} \notag \\ 
\leq &C\sqrt{n} K_{(n-1)}^{\frac{1}{q'}}  \sum_{m=\alpha n}^{n}  \frac{1}{\sqrt{m}} \pi_m^{3/2} \leq C K_{(n-1)}^{\frac{1}{q'}} \sum_{m=\alpha n}^{n} \pi_m^{3/2}. \label{32} 
\end{align}
For the second term,
\begin{align}
 & C \sqrt{n} \sum_{m=\alpha n}^{n} \frac{1}{\sqrt[2p']{m}}\pi_m^{3/2}\left(\frac{s_n^2-s_m^2+ \kappa^2}{s_{m-1}^2}\right)^{\frac{1}{2q'}} \notag\\ 
\leq & C \sqrt{n} \sum_{m=\alpha n}^{n} \frac{1}{\sqrt[2p']{m}} \pi_m^{\frac{3}{2}-\frac{1}{2q'}} \frac{1}{\sqrt[2q']{m}}  \notag\\
\leq& C \sum_{m=\alpha n}^{n} \pi_m^{\frac{3}{2}-\frac{1}{2q'}}, \label{33}
\end{align} 
where the second inequality follows from \eqref{maincond}. Then by \eqref{1overk} and Lemma \ref{lemma3}, both \eqref{32} and \eqref{33} are uniformly bounded.

\subsubsection{$A_4$}The final terms are the remainders of the Taylor expansion, for which we take $R_4(u,v)=\frac{v^4}{24}\varphi^{(3)}(u- \theta v)$ for some $0 \leq \theta \leq 1.$ For  $p=\infty$ and $q=1,$ we have
\begin{equation*}
A_4 \leq C  \, \lambda_m^{-4}s_n^{-4} \beta_{4,m}^{\infty} \leq C\pi_m^{2}
\end{equation*}
by \eqref{bound4}. Similar to the first two cases, we can show that
\begin{equation*}
\sup_n \sqrt{n} \sum_{m=1}^{\alpha n} \pi_m^2  \leq \frac{C\sqrt{n}}{s_n^4} \sum_{m=1}^{\alpha n} \sigma_m^4 \leq \frac{c\sqrt{n}}{s_n^3} \sum_{m=1}^{\alpha n} \frac{s_m^4}{m^2} + \frac{c'\sqrt{n}}{s_n^4} \leq C'.
\end{equation*}
For the part of the sum with higher indices ($\alpha n \leq m \leq n$), further work is required. Let $\mathbf{1}_A$ denote the indicator function of the event $A.$ Consider the four regions below:\\
$\Gamma_1:=\{|U|<1\}$.  In this case, 
\begin{align*}
\mathbf{E}\left[\frac{V_x^4}{24}\varphi^{(3)}(U-\theta V_x) \mathbf{1}_{\Gamma_1}\right]&\leq \frac{\beta_{4,m}^{\infty} \| \varphi^{(3)}\|_{\infty}}{24 \lambda_m^4 s_n^4} \mathbf{E}( \mathbf{1}_{|U| <1})\\
&\leq C \lambda_m^{-4}s_n^{-4} \beta_{4,m}^{\infty}  \left(\delta(m-1)+ \left(\lambda_m \frac{s_n}{s_{m-1}}\right)\right)
\end{align*}
where we bound $\mathbf{1}_{|u| <1}$ by a smoothly decreasing function and apply Lemma \ref{boldlem3}. \\
$\Gamma_2:=\left\{V_x \leq \frac{|U|}{2}, |U|\geq 1\right\}$. Observe that $|U-\theta V_x| \geq  \frac{|U|}{2}.$ Defining $\psi(u)=\sup_{|x|\geq |u|/2} |\varphi^{(3)}(x)|,$ we can apply Lemma \ref{boldlem3} to $\psi$ obtain
\begin{align*}
\mathbf{E}\left[\frac{V_x^4}{24}\varphi^{(3)}(U-\theta V_x) \mathbf{1}_{\Gamma_2}\right]
&\leq \mathbf{E}\left[\frac{V_x^4}{24}\psi(U) \mathbf{1}_{\Gamma_2}\right] \\
&\leq C \lambda_m^{-4}s_n^{-4} \beta_{4,m}^{\infty}  \left(\delta(m-1)+ \left(\lambda_m \frac{s_n}{s_{m-1}}\right)\right).
\end{align*}
$\Gamma_3:=\{\frac{|U|}{2}\leq V_x \leq U^2, |U| \geq 1\}$. In this region, $|R_4(U,V_x)|$ is uniformly bounded as below.
\begin{align*}
|R_4(U,V_x)\mathbf{1}_{\Gamma_3}| &\leq \sup_u\left\{\left|2\Phi(u)\right| +\left|u^2\varphi(u)\right|+\left|\frac{u^4}{2} \varphi'(u)\right|+ \left|\frac{u^6}{6} \varphi''(u)\right|\right\} \mathbf{P}(\Gamma_3)\\
& \leq C \lambda_m^{-4} s_n^{-4} \beta_{4,m}^{\infty}  \mathbf{E}(|U|^{-4} \mathbf{1}_{|U|\geq 1}) \\
& \leq C \lambda_m^{-4}s_n^{-4} \beta_{4,m}^{\infty}  \left(\delta(m-1)+ \left(\lambda_m \frac{s_n}{s_{m-1}}\right)\right),
\end{align*}
where the second line follows from Markov's inequality. \\
\noindent $\Gamma_4:=\{ V_x \geq U^2 \geq 1\}.$ Here we use the bound $|R_4(u,v_x)| \leq c |v|^3.$ See \cite{Re} for an estimate on this and some other constants used in this section. We then have,
\begin{align*}
\mathbf{E}\left[c |V_x|^3 \mathbf{1}_{\Gamma_4} \right] &\leq \lambda_m^{-3}s_n^{-3} \beta_{3,m}^{\infty}
\mathbf{P}(\Gamma_4) \\
& \leq  C \lambda_m^{-4}s_n^{-4} \beta_{3,m}^{\infty} \mathbf{E}[\left |U|^{-2} \mathbf{1}_{|U|\geq 1} \right] \\
&  \leq C \lambda_m^{-4}s_n^{-4} \beta_{3,m}^{\infty}  \left(\delta(m-1)+ \left(\lambda_m \frac{s_n}{s_{m-1}}\right)\right).
\end{align*}
where the first line is by H\"older's inequality for $p=\infty, q=1$ recalling that $V_x=\frac{X_m}{\lambda s_n}.$ Since $\beta_{3,m}^{\infty} \leq \beta_{4,m}^{\infty},$ the two derivations above can be put together to have
\begin{equation*}
A_4 \leq  C \pi_m^{2} \left(\delta(m-1)+ \left(\lambda_m \frac{s_n}{s_{m-1}}\right)\right).
 \end{equation*}
Multiplying by $\sqrt{n}$ and adding them up gives an expression that can be bounded by
\begin{equation}\label{43}
C K_{(n-1)} \sum_{m=\alpha n}^{n} \pi_m^{2} + C' \sum_{m=\alpha n}^{ n} \pi_m^{3/2}.
\end{equation}
We apply Lemma \ref{lemma3} to the second sum of \eqref{43}, while the first sum serves as the pivot term for the recursive bound in the final part of the proof. 

\subsection{Recursive bound}

Combining our estimates in \eqref{bounddelta} and recalling that $\kappa^2=\gamma \frac{s_n^2}{n}$ for $\gamma>0$, we have
\begin{equation*}
K_n \leq  C K_{(n-1)} \sum_{m=\alpha n}^{n} \pi_m^{2} +c.
\end{equation*}
We observe that $\sum_{m=\alpha n}^n \pi_m^2$ can be taken as small as desired by choosing $\gamma$ large enough. Hence,
\begin{equation*}
K_n \leq \beta  K_{(n-1)} +c.
\end{equation*}
for some $0< \beta <1$ and for some constant $c.$ This implies that  $K_n \leq K_{(n-1)}$ if $K_{(n-1)}\geq \frac{c}{1-\beta}.$ Thus, $K_n \leq \max \left\{\frac{c}{1-\beta}, K_{(n-1)} \right\}.$ So the conclusion is $\sup_n K_n < \infty,$ which proves the theorem. 
\bbox 
 
\section*{Acknowledgement}
The author would like to thank Jason Fulman for suggesting the problem and possible directions to follow.

\bibliographystyle{alpha}
\bibliography{Kiraz}

\begin{thebibliography}{CKSS72}

\bibitem[Aal77]{Aa}
O.~O. Aalen.
\newblock Weak convergence of stochastic integrals related to counting
  processes.
\newblock {\em Zeitschrift f\"{u}r Wahrscheinlichkeitstheorie und Verwandte
  Gebiete}, 38(4):261--277, 1977.

\bibitem[B\'09]{BoS}
M.~B\'ona.
\newblock Real zeros and normal distribution for statistics on stirling
  permutations defined by gessel and stanley.
\newblock {\em SIAM Journal on Discrete Mathematics}, 23(1):401--406, 2009.

\bibitem[B\'16]{Bo}
M.~B\'ona.
\newblock {\em Combinatorics of permutations}.
\newblock Chapman and Hall/CRC, 2016.

\bibitem[Ben73]{B}
E.~A. Bender.
\newblock Central and local limit theorems applied to asymptotic enumeration.
\newblock {\em Journal of Combinatorial Theory Series A}, 15(1):91--111, 1973.

\bibitem[Ber44]{Ber}
H.~Bergstr\"om.
\newblock On the central limit theorem.
\newblock {\em Scandinavian Actuarial Journal}, 1944(3-4):139--153, 1944.

\bibitem[Bol82]{Bh}
E.~Bolthausen.
\newblock Exact convergence rates in some martingale central limit theorems.
\newblock {\em The Annals of Probability}, 10:672--688, 3 1982.

\bibitem[BR89]{BR89}
Pierre Baldi and Yosef Rinott.
\newblock On normal approximations of distributions in terms of dependency
  graphs.
\newblock {\em The Annals of Probability}, pages 1646--1650, 1989.

\bibitem[CD17]{CD}
S.~Chatterjee and P.~Diaconis.
\newblock A central limit theorem for a new statistic on permutations.
\newblock {\em Indian Journal of Pure and Applied Mathematics}, 48(4):561--573,
  2017.

\bibitem[CKSS72]{CK}
L.~Carlitz, D.~C. Kurtz, R.~Scoville, and O.~P. Stackelberg.
\newblock Asymptotic properties of {Eulerian} numbers.
\newblock {\em Zeitschrift f\"{u}r Wahrscheinlichkeitstheorie und Verwandte
  Gebiete}, 23(1):47--54, 1972.

\bibitem[CRS66]{Car}
L.~Carlitz, D.~P. Roselle, and R.~A. Scoville.
\newblock Permutations and sequences with repetitions by number of increases.
\newblock {\em Journal of Combinatorial Theory}, 1(3):350--374, 1966.

\bibitem[DB62]{DB}
F.~David and D.~Barton.
\newblock {\em Combinatorial Chance}.
\newblock Hafner Publishing Co., 1962.

\bibitem[DF85]{DF}
J.~D\'esarm\'enien and D.~Foata.
\newblock Fonctions sym\'etriques et s\'eries hyperg\'eom\'etriques basiques
  multivari\'ees.
\newblock {\em Bulletin de la Soci\'et\'e Math\'ematique de France}, 113:3--22,
  1985.

\bibitem[DMP95]{DMP}
P.~Diaconis, M.~McGrath, and J.~Pitman.
\newblock Riffle shuffles, cycles, and descents.
\newblock {\em Combinatorica}, 15(1):11--29, 1995.

\bibitem[Fel68]{Fe}
W.~Feller.
\newblock {\em An introduction to probability theory and its applications
  Vol.1}.
\newblock Wiley, New York, NY, 1968.

\bibitem[Foa10]{Fo}
D.~Foata.
\newblock Eulerian polynomials: from {Euler's} time to the present.
\newblock In {\em The legacy of Alladi Ramakrishnan in the mathematical
  sciences}, pages 253--273, New York, 2010. Springer.

\bibitem[Fro10]{Fr}
G.~Frobenius.
\newblock {Uber die Bernoullischen und die Eulerschen Polynome}.
\newblock {\em Sitzungsberichte der Preussische Akademie der Wissenschaften},
  pages 809--847, 1910.

\bibitem[FS09]{FS}
P.~Flajolet and R.~Sedgewick.
\newblock {\em Analytic Combinatorics}.
\newblock Cambridge University Press, 2009.

\bibitem[Ful98]{Fu}
J.~Fulman.
\newblock The distribution of descents in fixed conjugacy classes of the
  symmetric groups.
\newblock {\em Journal of Combinatorial Theory Series A}, 84(2):171--180, 1998.

\bibitem[Ful04]{F}
J.~Fulman.
\newblock Stein's method and non-reversible {M}arkov chains.
\newblock {\em Institute of Mathematical Statistics Lecture Notes-Monograph
  Series}, 46:66--74, 2004.

\bibitem[Ful06a]{F3}
J.~Fulman.
\newblock An inductive proof of the {Berry-Esseen} theorem for character
  ratios.
\newblock {\em Annals of Combinatorics}, 10(3):319--332, 2006.

\bibitem[Ful06b]{F2}
J.~Fulman.
\newblock Martingales and character ratios.
\newblock {\em Transactions of the American Mathematical Society},
  358(10):4533--4552, 2006.

\bibitem[Gon44]{G}
V.~Goncharov.
\newblock Du domaine d'analyse combinatoire.
\newblock {\em Izv. Akad. Nauk SSSR Ser. Mat}, 8:3--48, 1944.

\bibitem[GS78]{GS}
I.~Gessel and R.~P. Stanley.
\newblock Stirling polynomials.
\newblock {\em Journal of Combinatorial Theory Series A}, 24(1):24--33, 1978.

\bibitem[GZ06]{GZ}
V.~J.~W. Guo and J.~Zeng.
\newblock The {Eulerian} distribution on involutions is indeed unimodal.
\newblock {\em Journal of Combinatorial Theory Series A}, 113(6):1061--1071,
  2006.

\bibitem[Hae88]{Hae}
E.~Haeusler.
\newblock On the rate of convergence in the central limit theorem for
  martingales with discrete and continuous time.
\newblock {\em The Annals of Probability}, pages 275--299, 1988.

\bibitem[Har67]{Ha}
L.~H. Harper.
\newblock Stirling behavior is asymptotically normal.
\newblock {\em The Annals of Mathematical Statistics}, 38(2):410--414, 1967.

\bibitem[Hel82]{He}
I.~S. Helland.
\newblock Central limit theorems for martingales with discrete or continuous
  time.
\newblock {\em Scandinavian Journal of Statistics}, 9(2):79--94, 1982.

\bibitem[HH14]{HH}
P.~Hall and C.~Heyde.
\newblock {\em Martingale limit theory and its application}.
\newblock Academic Press, New York, 2014.

\bibitem[HR09]{Ho}
C.~Houdr\'e and R.~Restrepo.
\newblock A probabilistic approach to the asymptotics of the length of the
  longest alternating subsequence.
\newblock {\em The Electronic Journal of Combinatorics}, 16(\#R00), 2009.

\bibitem[HV12]{HV}
J.~Haglund and M.~Visontai.
\newblock Stable multivariate {Eulerian} polynomials and generalized {Stirling}
  permutations.
\newblock {\em European Journal of Combinatorics}, 33(4):477--487, 2012.

\bibitem[Kim17]{Ki}
G.~B. Kim.
\newblock {\em Distribution of descents in matchings}.
\newblock PhD thesis, University of Southern California, 2017.

\bibitem[Kim19]{K}
G.~B. Kim.
\newblock Distribution of descents in matchings.
\newblock {\em Annals of Combinatorics}, 23(1):73--87, 2019.

\bibitem[Kou94]{Kt}
M.~V. Koutras.
\newblock Eulerian numbers associated with sequences of polynomials.
\newblock {\em Fibonacci Quart}, 32(1):44--57, 1994.

\bibitem[Lin22]{L}
J.~W. Lindeberg.
\newblock {Eine neue Herleitung des Exponentialgesetzes in der
  Wahrscheinlichkeitsrechnung}.
\newblock {\em Mathematische Zeitschrift}, 15(1):211--225, 1922.

\bibitem[LW07]{LW}
L.~L. Liu and Y.~Wang.
\newblock A unified approach to polynomial sequences with only real zeros.
\newblock {\em Advances in Applied Mathematics}, 38(4):542--560, 2007.

\bibitem[Pet75]{Pet}
V.~V. Petrov.
\newblock {\em Sums of independent random variables}, volume~82.
\newblock Springer, Berlin, 1975.

\bibitem[Pet13]{Pe}
T.~K. Petersen.
\newblock Two-sided {Eulerian} numbers via balls in boxes.
\newblock {\em Mathematics Magazine}, 86(3):159--176, 2013.

\bibitem[Pet15]{PEN}
T.~K. Petersen.
\newblock {\em Eulerian numbers}.
\newblock Birkh\"{a}user, New York, 2015.

\bibitem[Pit97]{Pit}
J.~Pitman.
\newblock Probabilistic bounds on the coefficients of polynomials with only
  real zeros.
\newblock {\em Journal of Combinatorial Theory Series A}, 77(2):279--303, 1997.

\bibitem[Ren96]{Re}
J.~Renz.
\newblock A note on exact convergence rates in some martingale central limit
  theorems.
\newblock {\em The Annals of Probability}, 24(3):1616--1637, 1996.

\bibitem[Rio68]{R}
J.~Riordan.
\newblock {\em Combinatorial identities}, volume~6.
\newblock Wiley, New York, 1968.

\bibitem[RR99]{RR}
Y.~Rinott and V.~Rotar.
\newblock Some bounds on the rate of convergence in the {CLT} for martingales.
  {I}.
\newblock {\em Theory of Probability \& Its Applications}, 43(4):604--619,
  1999.

\bibitem[SS06]{SQ}
Q.~Shao and Z.~Su.
\newblock The {Berry-Esseen} bound for character ratios.
\newblock {\em Proceedings of the American Mathematical Society},
  134:2153--2159, 2006.

\bibitem[Sta86]{EC1}
R.~P. Stanley.
\newblock {\em Enumerative Combinatorics. Vol. I}.
\newblock The Wadsworth Brooks/Cole Mathematics Series, 1986.

\bibitem[Sta89]{Su}
R.~P. Stanley.
\newblock Log-concave and unimodal sequences in algebra, combinatorics, and
  geometry.
\newblock {\em Annals of the New York Academy of Sciences}, 576(1):500--535,
  1989.

\bibitem[Sta08]{St}
R.~P. Stanley.
\newblock Longest alternating subsequences of permutations.
\newblock {\em Michigan Mathematical Journal}, 57(1):675--687, 2008.

\bibitem[SV15]{SV}
C.~Savage and M.~Visontai.
\newblock The s-{Eulerian} polynomials have only real roots.
\newblock {\em Transactions of the American Mathematical Society},
  367(2):1441--1466, 2015.

\bibitem[Tan73]{T}
S.~Tanny.
\newblock A probabilistic interpretation of {Eulerian} numbers.
\newblock {\em Duke Mathematical Journal}, 40(4):717--722, 1973.

\bibitem[Wid06]{W}
H.~Widom.
\newblock On the limiting distribution for the length of the longest
  alternating sequence in a random permutation.
\newblock {\em Electronic Journal of Combinatorics}, 13(R25), 2006.

\bibitem[WS96]{WS}
D.~I. Warren and E.~Seneta.
\newblock Peaks and {Eulerian} numbers in a random sequence.
\newblock {\em Journal of Applied Probability}, 33(1):101--114, 1996.

\end{thebibliography}

\end{document}